\newtheorem{theorem}{Theorem}[section]
\newtheorem{lemma}[theorem]{Lemma}
\newtheorem{definition}[theorem]{Definition}
\newtheorem{proposition}[theorem]{Proposition}
\newenvironment{proof}{\vspace*{\parsep}\noindent {\bf proof:}}
{\qed\\[1em]}
\begin{document}

\begin {frontmatter}
\title{Topological Forcing Semantics with Settling}
\author{Robert S. Lubarsky}
\address {Dept. of Mathematical Sciences, Florida Atlantic
University, Boca Raton, FL 33431, USA \\\ead{rlubarsk@fau.edu} }

\begin{abstract} It was realized early on that topologies can
model constructive systems, as the open sets form a Heyting
algebra. After the development of forcing, in the form of
Boolean-valued models, it became clear that, just as over ZF any
Boolean-valued model also satisfies ZF, so do Heyting-valued
models satisfy IZF, which stands for Intuitionistic ZF, the most
direct constructive re-working of the ZF axioms. In this paper, we
return to topologies, and introduce a variant model, along with a
correspondingly revised forcing or satisfaction relation. The
purpose is to prove independence results related to weakenings of
the Power Set axiom. The original motivation is the second model
of \cite {LR}, based on $\mathbb{R}$, which shows that
Exponentiation, in the context of CZF minus Subset Collection,
does not suffice to prove that the Dedekind reals form a set. The
current semantics is the generalization of that model from
$\mathbb{R}$ to an arbitrary topological space. It is investigated
which set-theoretic principles hold in such models in general. In
addition, natural properties of the underlying topological space
are shown to imply the validity of stronger such principles.

\begin {keyword} {constructivism, set theory, semantics, topology\\AMS
classification 03F50, 03E70, 03C90}
\end {keyword}
\end{abstract}

\end {frontmatter}

\section{Introduction}
Topological interpretations of constructive systems were first
studied by Stone \cite{St} and Tarski \cite{T}, who independently
provided such for propositional logic. This was later extended by
Mostowski \cite{Mo} to predicate logic. The first application of
this to any sort of higher-order system was Scott's interpretation
of analysis \cite{Sc1,Sc2}. Grayson \cite{G,G2} then generalized
the latter to the whole set-theoretic universe, to provide a model
of IZF, Intuitionistic Zermelo-Fraenkel Set Theory. Although not
directly relevant to our concerns, it was soon realized that
topological semantics could be unified with Kripke and Beth
models, and all generalized, via categorical semantics; see
\cite{FS} and \cite{MM} for good introductions. Here is
introduced, not a generalized, but rather an alternative
semantics. (Incidentally, this semantics can also be understood
categorically, as determined by Streicher (unpublished).)

An instance of this semantics was already applied in \cite{LR} to
the reals $\mathbb{R}$. The context there was CZF, Constructive
ZF, introduced in \cite {ac1,ac2,ac3} and exposited in \cite
{ac4}. CZF is currently the most studied system of constructive
set theory, because of the modesty of its proof-theoretic strength
coupled with its implicational power. For instance, even though
CZF does not have the proof-theoretically very strong Power Set
Axiom, its substitute, Aczel's Subset Collection, suffices to
prove that the Dedekind reals form a set. What was not clear was
whether a further weakening of Power Set from Subset Collection to
Exponentiation, or the existence of function spaces, would also
prove the same. In the context of the other CZF axioms, Subset
Collection and Exponentiation are proof-theoretically equivalent,
so proof-theoretic analyses would not be able to answer this
question. In contrast, one of the models of \cite {LR} satisfies
CZF$_{Exp}$, yet the Dedekind cuts do not form a set, thereby
showing the necessity of Subset Collection, or at least of
something more than Exponentiation.

The essence of the construction there is that, as in a traditional
topological model, the truth value of set membership ($\sigma \in
\tau$, where $\sigma$ and $\tau$ are terms) is an open set of
$\mathbb{R}$, but at any moment the terms under consideration can
collapse to ground model terms. (A ground model term is the
canonical image of a ground model set -- think of the standard
embedding of V into V[G] in classical forcing.) Such a collapse
does not make the variable sets disappear, though. So no set could
be the Dedekind cuts: any such candidate could at any time
collapse to a ground model set, but then it wouldn't contain the
canonical generic because that's a variable set, and this generic,
over $\mathbb{R}$, is a Dedekind cut. \footnote{For those already
familiar with a similar-sounding construction by Joyal, this is
exactly what distinguishes the two. Joyal started with a
topological space $T$, and took the union of $T$ with a second
copy of $T$, the latter carrying the discrete topology (i.e. every
subset is open). So by Joyal, you could specialize at a point, but
then every set is also specialized there. Here, you can specialize
every set you're looking at at a point, but that won't make the
ambient variable sets disappear. Alternatively, the whole universe
will specialize, but at the same time be reborn. For an exposition
of Joyal's argument in print, see either \cite{G2} or \cite{TvD}
p. 805-807.}

This process of collapsing to a ground model set we call settling
down. Our purpose is to show how this settling semantics works in
an arbitrary topological space, not just $\mathbb{R}$. This
extension is not completely straightforward. Certain uniformities
of $\mathbb{R}$ allowed for simplifications in the definition of
forcing ($\Vdash$) and for proofs of stronger set-theoretic
axioms, most notably Full Separation and Exponentiation. In the
next section, we prove as much as we can making no assumptions on
the topological space $T$ being worked over; in the following
section, natural and appropriately modest assumptions are made on
$T$ so that Separation and Exponentiation can be proven.

The greatest weakness in what can be proven in the general case is
in the family of Power Set-like axioms. This is no surprise, as
the semantics was developed for a purpose which necessitated the
failure of Subset Collection (and hence of Power Set itself). That
Exponentiation ended up holding is thanks to the particularities
of $\mathbb{R}$, not to settling semantics. Rather, what does hold
in general is a weakened version of all of these Power Set-like
axioms. The reason that Power Set fails, like the non-existence of
the set of Dedekind cuts above, is that any candidate for the
power set of $X$ might collapse to a ground model set, and so
would then no longer contain any variable subset of $X$. However,
that variable subset might itself collapse, and then would be in
the classical power set of $X$. So while the subset in question,
before the collapse, might not equal a member of the classical
power set, it cannot be different from every such member. That is
the form of Power Set which holds in the settling semantics:

Eventual Power Set: $\forall X \; \exists C \; (\forall Y \in C \;
Y \subseteq X) \wedge (\forall Y \subseteq X \; \neg \forall Z \in
C \; Y \not = Z)$.

Although we will not need them, there are comparable weakenings of
Subset Collection (or Fullness) and Exponentiation:

Eventual Fullness: $\forall X, Y \; \exists C \; (\forall Z \in C
\; Z$ is a total relation from $X$ to $Y$) $\wedge \; (\forall R$
if $R$ is a total relation from $X$ to $Y$ then $\neg \forall Z
\in C \; Z \not \subseteq R$).

Eventual Exponentiation: $\forall X, Y \; \exists C \; \forall F$
if $F$ is a total function from $X$ to $Y$ then $\neg \forall Z
\in C \; F \not = Z$.

It is easy to see that Power Set implies Fullness, which itself
implies Exponentiation. Essentially the same arguments will prove:

\begin{proposition} Eventual Power Set implies Eventual Fullness, which in
turn implies Eventual Exponentiation.
\end{proposition}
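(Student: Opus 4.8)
The plan is to mirror the classical derivations of "Power Set $\Rightarrow$ Fullness $\Rightarrow$ Exponentiation" step by step, checking at each point that the extra double-negated clause in the "eventual" versions survives the standard manipulations. The key structural observation is that each "eventual" axiom has the shape "$\exists C$ (every member of $C$ is a gadget of the required type) $\wedge$ (every genuine gadget $G$ satisfies $\neg\forall Z\in C\; G\neq Z$)," so to pass from one to the next I need (i) a uniform way to turn gadgets of the stronger kind into gadgets of the weaker kind (a relation is recovered from a candidate power set as a subset of $X\times Y$; a function is a special kind of total relation), and (ii) the complementary direction, that the weaker gadget associated to $G$ is "approximated" by some member of the image/subcollection of $C$ whenever $G$ itself is approximated by a member of $C$.

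First I would treat Eventual Power Set $\Rightarrow$ Eventual Fullness. Given $X,Y$, apply Eventual Power Set to $X\times Y$ to get $C_0$, and set $C=\{\,Z\in C_0 : Z\text{ is a total relation from }X\text{ to }Y\,\}$; Separation is not needed here since this is bounded, but in the CZF-minus-Power-Set setting I should note it is a legitimate instance of Bounded Separation. The first conjunct of Eventual Fullness holds by construction. For the second, suppose $R$ is a total relation from $X$ to $Y$; then $R\subseteq X\times Y$, so by the second conjunct for $C_0$ we get $\neg\forall Z\in C_0\; R\neq Z$. The task is to upgrade this to $\neg\forall Z\in C\; R\not\subseteq Z$. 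The point is that any $Z\in C_0$ with $Z=R$ automatically \emph{is} a total relation from $X$ to $Y$ (since $R$ is), hence lies in $C$, and $R\subseteq Z$; so intuitionistically, from $\neg\neg(\exists Z\in C_0\; R=Z)$ one concludes $\neg\neg(\exists Z\in C\; R\subseteq Z)$, which is exactly $\neg\forall Z\in C\; R\not\subseteq Z$. This implication is pure intuitionistic logic — the antecedent literally entails the consequent — so no choice or excluded middle enters.

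Next, Eventual Fullness $\Rightarrow$ Eventual Exponentiation. Given $X,Y$, apply Eventual Fullness to get $C$ whose members are total relations from $X$ to $Y$ and which "eventually catches" every such relation. A total function is in particular a total relation, so if $F:X\to Y$ then the second conjunct of Eventual Fullness gives $\neg\forall Z\in C\; F\not\subseteq Z$, i.e. $\neg\neg\exists Z\in C\; F\subseteq Z$. Now for functions, $F\subseteq Z$ together with both being total relations on the same domain forces $F=Z$: if $Z$ is a total relation with $F\subseteq Z$ and $F$ is a function with domain $X$, then for each $x$ there is $y$ with $\langle x,y\rangle\in F\subseteq Z$, and any $\langle x,y'\rangle\in Z$ must by totality-plus-functionality considerations... — here I must be a little careful, because an arbitrary total relation $Z\supseteq F$ need not be a function, so $F\subseteq Z$ does \emph{not} give $F=Z$ in general. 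The honest fix is to take $C$ from Eventual Fullness and let $C'=\{\,Z\in C : Z\text{ is a function}\,\}$; but then I have lost the catching property, since the witness $Z$ with $F\subseteq Z$ need not be in $C'$.

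The resolution — and I expect this to be the main obstacle, exactly as it is the one genuinely non-formal point — is to strengthen how Fullness is applied: I want, for each function $F$, a \emph{member of $C$ that is itself a function and equals $F$}. The standard trick is to apply Eventual Fullness not to $Y$ but in a way that the total relations in $C$ can be canonically trimmed to functions while staying in a fixed set; concretely, use that classically Fullness is proven by taking a single "full" relation of relations and that each total relation contains a total \emph{function} (by a choice-free argument when the relation is a set of pairs over $X\times Y$ and we thin using, say, a well-order — but we have no well-order). Since we are only after the double-negated conclusion, the cleanest route is: from $\neg\neg\exists Z\in C\,(F\subseteq Z)$ and the fact that $F\subseteq Z$ lets us \emph{define} the function $Z\restriction\{x : \langle x, \cdot\rangle\text{-fiber is a singleton}\}$, argue that $\neg\neg$-provably this trimmed object equals $F$; then replace $C$ by the image of $C$ under the (partial, but total on the relevant inputs) trimming operation, which is a set by Replacement/Bounded Separation. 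Thus Eventual Exponentiation holds with this trimmed $C$. I would write this last step out carefully, flagging that it is the only place where anything beyond rearranging quantifiers and intuitionistic propositional logic is used, and that — matching the parenthetical "essentially the same arguments" in the statement — it is precisely the trimming argument from the classical proof, now performed under a double negation so that no choice principle is required.
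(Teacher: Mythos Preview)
Your overall strategy---mirror the classical implications and check that the double-negated clause survives---is exactly what the paper has in mind (the paper itself says only ``essentially the same arguments'' and gives no details). The first implication is fine as you wrote it.

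The second implication, however, contains a slip that creates the entire difficulty you then wrestle with. You have the inclusion in Eventual Fullness backwards: the axiom in the paper reads $\neg\forall Z\in C\; Z\not\subseteq R$, i.e.\ $\neg\neg\exists Z\in C\; Z\subseteq R$, not $R\subseteq Z$. This is the usual direction for Fullness---the full set $C$ supplies a total sub-relation of any given total relation. With the correct direction the second step is immediate: if $F$ is a function and $Z\in C$ is a total relation with $Z\subseteq F$, then for each $x$ the (inhabited) $Z$-fibre over $x$ is contained in the singleton $\{F(x)\}$, so $Z=F$. Hence $\neg\neg\exists Z\in C\; Z\subseteq F$ already gives $\neg\neg\exists Z\in C\; Z=F$, which is Eventual Exponentiation for the very same $C$ (the statement of Eventual Exponentiation in the paper imposes no requirement that members of $C$ be functions; and if one wanted that, Bounded Separation on $C$ suffices, since the witnessing $Z$ is equal to $F$ and hence a function).

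Your proposed ``trimming'' repair is therefore unnecessary, and as written it is also doubtful: from $F\subseteq Z$ the set $\{x:\text{the }Z\text{-fibre over }x\text{ is a singleton}\}$ need not be all of $X$, so the trimmed relation need not be total, and there is no evident reason it equals $F$ even under a double negation. Once you reverse the inclusion, the whole paragraph about choice, well-orders, and trimming can simply be deleted.
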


As already stated, the original motivation of this work was to
generalize an extant construction from one to all topologies. Now
that it is done, other uses can be imagined. The theory identified
here is incomparable with CZF, so its proof-theoretic strength is
unclear. If it turns out to be weak, perhaps it could be combined
with CZF to provide a slight strengthening of the latter while
maintaining a similar proof theory. In any case, the
model-theoretic construction might be useful for further
independence results, the purpose of the first, motivating model.
A long-term project is some kind of classification of models,
topological or otherwise; having this unconventional example might
help find other yet-to-be-discovered constructions. A question
raised by van den Berg is how the model would have to be expanded
in order to get a model of IZF. He observed that the recursive
realizability model based on (definable subsets of) the natural
numbers \cite{L} (also discovered independently by Streicher in
unpublished work), which satisfies CZF + Full Separation (and
necessarily not Power Set), is essentially just the collection of
subcountable sets from the full recursive realizability model
\cite{mccarty}, and hence is naturally extendable to an IZF model.
It is at best unclear how the current model could be so extended.
Somewhat speculatively, applications to computer science are also
conceivable, wherever such modeling might be natural. For
instance, constructive logic can naturally be used to model
computation when objects are viewed as having properties only
partially determined at any stage; if in addition parallel
computation is part of the programming paradigm, it could be that
a variable is passed to several parallel sub-computations, which
specify the variable more and in incompatible ways. This is
similar to the current construction, where there are two
transition functions, both leading to the same future but under
one function the variable/generic is fully specified and under the
other it's not.
\section{The General Case}

First we define the term structure of the topological model with
settling, then truth in the model (the forcing semantics), and
then we prove that the model satisfies some standard set-theoretic
axioms.

\begin{definition} For a topological space $T$, a term is a set of the
form $\{ \langle \sigma_i, J_i \rangle \mid i \in I \} \cup \{
\langle \sigma_h, r_h \rangle \mid h \in H \}$, where each
$\sigma$ is (inductively) a term, each $J$ an open set, each $r$
is a member of $T$, and $H$ and $I$ index sets.
\end{definition}

The first part of each term is as usual. It suffices for the
embedding $x \mapsto \hat{x}$ of the ground model into the
topological model:

\begin{definition} $\hat{x} = \{ \langle \hat{y}, T \rangle \mid y \in x
\}$. Any term of the form $\hat{x}$ is called a ground model term.

For $\phi$ a formula in the language of set theory with (set, not
term) parameters $x_0, x_1, ..., x_n$, then $\hat{\phi}$ is the
formula in the term language obtained from $\phi$ by replacing
each $x_i$ with $\hat{x_i}$.

$\check{}$ is the inverse of $\; \hat{}$, for both sets/terms and
formulas: $\hat{\check{\tau}} = \tau$, $\check{\hat{x}} = x$,
$\hat{\check{\phi}} = \phi,$ and $\check{\hat{\phi}} = \phi$.
\end{definition}

The second part of the definition of a term plays a role only when
we decide to have the term settle down and stop changing. This
settling down in described as follows.

\begin{definition} For a term $\sigma$ and $r \in T$,
$\sigma^r$ is defined inductively on the terms as $\{ \langle
\sigma_{i}^r, T \rangle \mid \langle \sigma_{i}, J_{i} \rangle \in
\sigma \wedge r \in J_{i} \} \cup \{ \langle \sigma_h^r, T \rangle
\mid \langle \sigma_h, r \rangle \in \sigma \}$.
\end{definition}

Note that $\sigma^r$ is a ground model term. It bears observation
that $(\sigma^r)^s = \sigma^r$.

\begin{definition}
For $\phi = \phi(\sigma_0, ... , \sigma_i)$ a formula with
parameters $\sigma_0, ... , \sigma_i$, $\phi^r$ is
$\phi(\sigma_0^r, ... , \sigma_i^r)$.
\end{definition}

We define a forcing relation $J \Vdash \phi$, with $J$ an open
subset of $T$ and $\phi$ a formula.

\begin{definition}
$J \Vdash \phi$ is defined inductively on $\phi$:

$J \Vdash \sigma = \tau$ iff for all $\langle \sigma_{i}, J_{i}
\rangle \in \sigma \; J \cap J_{i} \Vdash \sigma_{i} \in \tau$ and
vice versa, and for all $r \in J$ $\sigma^r = \tau^r$

$J \Vdash \sigma \in \tau$ iff for all $r \in J$ there is a
$\langle \tau_{i}, J_{i} \rangle \in \tau$ and $J_r \subseteq J_i$
containing $r$ such that $J_r \Vdash \sigma = \tau_{i}$

$J \Vdash \phi \wedge \psi$ iff $J \Vdash \phi$ and $J \Vdash
\psi$

$J \Vdash \phi \vee \psi$ iff for all $r \in J$ there is a $J_r
\subseteq J$ containing $r$ such that $J_r \Vdash \phi$ or $J_r
\Vdash \psi$

$J \Vdash \phi \rightarrow \psi$ iff for all $J' \subseteq J$ if
$J' \Vdash \phi$ then $J' \Vdash \psi$, and, for all $r \in J$,
there is a $J_r \subseteq J$ containing $r$ such that, for all $K
\subseteq J_r$, if $K \Vdash \phi^r$ then $K \Vdash \psi^r$

$J \Vdash \exists x \; \phi(x)$ iff for all $r \in J$ there is a
$J_r \subseteq J$ containing $r$ and a $\sigma$ such that $J_r
\Vdash \phi(\sigma)$

$J \Vdash \forall x \; \phi(x)$ iff for all  $\sigma$  $J \Vdash
\phi(\sigma)$, and for all $r \in J$ there is a $J_r \subseteq J$
containing $r$ such that for all $\sigma$ $J_r \Vdash
\phi^r(\sigma)$.

\end{definition}
(Notice that in the last clause, $\sigma$ is not interpreted as
$\sigma^r$.)

\begin{lemma} $\Vdash$ is sound for constructive logic.
\end{lemma}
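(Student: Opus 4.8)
The plan is to verify that each rule of some standard proof system for intuitionistic predicate logic — say a natural-deduction calculus — is validated by $\Vdash$, after first establishing the structural properties of $\Vdash$ that make such a verification go through. Before touching the logical rules I would isolate three lemmas about the forcing relation itself. First, \emph{monotonicity}: if $J \Vdash \phi$ and $J' \subseteq J$ then $J' \Vdash \phi$; this is proved by a routine induction on $\phi$, using in the $\to$ and $\forall$ clauses the fact that the ``settled'' side-condition quantifies over all $r \in J$, hence a fortiori over all $r \in J'$, and for each such $r$ one intersects the witnessing $J_r$ with $J'$. Second, a \emph{locality} or \emph{sheaf} principle: if $\{J_r \mid r \in J\}$ is a family of open sets with $r \in J_r \subseteq J$ and $J_r \Vdash \phi$ for each $r$, then $J \Vdash \phi$; again an induction on $\phi$, where the existential-style clauses ($\in$, $\vee$, $\exists$, and the settled parts of $\to$ and $\forall$) are immediate because they are already phrased pointwise, and the conjunctive clauses ($=$, $\wedge$, the universal part of $\forall$, and the $J' \Vdash \phi \Rightarrow J' \Vdash \psi$ part of $\to$) follow from monotonicity together with the observation that any $J' \subseteq J$ is covered by the sets $J' \cap J_r$. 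Third, I would record the interaction of $\Vdash$ with settling: for $r \in J$, $J \Vdash \phi$ implies $\{r'\} \Vdash \phi^r$ for a neighborhood of $r$ — more precisely, that the settled clauses built into the definition are themselves monotone and local in the same sense, so that $\phi^r$ behaves like an ordinary (settled, i.e. Heyting-valued over the ground model) statement. The key conceptual point, which I expect to need spelling out carefully, is that $(\sigma^r)^s = \sigma^r$ and $\sigma^r$ is a ground model term, so on the settled side the semantics degenerates to the classical/Heyting topological model, whose soundness is already known.

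With these in hand the logical rules are checked one at a time. The propositional rules ($\wedge$-introduction and elimination, $\vee$-introduction and elimination, $\to$-introduction and modus ponens, and \emph{ex falso} from $\bot$, where $\bot$ is $\widehat{\,0=1\,}$ or simply a formula forced only by $\emptyset$) reduce to manipulations of the clauses using monotonicity and locality; the only one requiring thought is $\vee$-elimination, where from $J \Vdash \phi \vee \psi$ one passes to the cover $\{J_r\}$, applies the hypotheses on each piece, and reassembles via the locality lemma, remembering to handle the settled side-conditions in parallel. For $\to$-introduction one must produce both halves of the $\Vdash \phi \to \psi$ clause: the unsettled half is the deduction hypothesis directly, and the settled half is obtained by applying the same derivation to the settled terms, i.e. to $\phi^r, \psi^r$, which is legitimate because provability is closed under the substitution $\sigma \mapsto \sigma^r$. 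The quantifier rules are analogous: $\exists$-introduction and $\forall$-elimination are immediate from the clauses (taking $J_r = J$), while $\exists$-elimination and $\forall$-introduction again require the cover-and-reassemble pattern plus attention to the settled component, and the usual eigenvariable condition is respected because terms $\sigma$ range over \emph{all} terms, not just ground model ones — this is exactly the content of the parenthetical remark after the definition.

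The main obstacle, and the place where I would spend the most care, is the $\to$ (and correspondingly $\forall$) clause, because its two-part definition — an ``honest'' part over all open $J' \subseteq J$ and a ``settled'' part over all $r \in J$ — means that every rule involving implication or universal quantification must be verified twice, once on each side, and the two verifications must be shown to be compatible under the covering/locality arguments. In particular, proving the locality lemma for $\to$ is delicate: from $J_r \Vdash \phi \to \psi$ for all $r$ I must derive both that every $J' \subseteq J$ forcing $\phi$ forces $\psi$ (here I cover $J'$ by the $J' \cap J_r$, use monotonicity to get $J' \cap J_r \Vdash \phi \to \phi$ hence $\Vdash \psi$ on each piece, and reassemble), and that the settled condition holds at every point of $J$ (which is local essentially by definition). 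Once the $\to$/$\forall$ bookkeeping is set up correctly, the remaining cases are routine, and the lemma follows by induction on the length of derivations.
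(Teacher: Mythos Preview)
The paper does not actually supply a proof of this lemma; it is stated bare and the text moves immediately on to the equality axioms. So there is nothing in the paper to compare your argument against line by line. That said, your plan is the standard one and is correct in outline: isolate monotonicity, locality (covering), and the settling transfer, then verify each natural-deduction rule by induction on derivations, handling the two-part $\to$ and $\forall$ clauses in parallel.

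It is worth noting that the three structural lemmas you propose to prove first are exactly parts 2, 3, and 5 of the paper's Lemma~\ref{helpfullemma2}, which appears \emph{after} the soundness lemma in the paper's ordering. So the author evidently regards those facts as the real content and treats the rule-by-rule verification as routine enough to suppress. Your write-up makes explicit what the paper leaves implicit, and your identification of the $\to$/$\forall$ bookkeeping as the delicate point is accurate: the crucial device is that a derivation $\Gamma,\phi \vdash \psi$ yields, by the purely syntactic substitution $\sigma \mapsto \sigma^r$, a derivation $\Gamma^r,\phi^r \vdash \psi^r$ of the same length, so the induction hypothesis applies to the settled instance once Lemma~\ref{helpfullemma2}(5) hands you a neighbourhood forcing $\Gamma^r$. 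One small slip: in your locality argument for $\to$ you wrote ``$J' \cap J_r \Vdash \phi \to \phi$'' where you meant $\phi \to \psi$.
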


\begin{lemma} \label{equalitylemma2}
T forces the equality axioms, to wit:
\begin{enumerate}
\item $\forall x \; x=x$

\item $\forall x, y \; x=y \rightarrow y=x$

\item $\forall x, y, z \; x=y \wedge y=z \rightarrow x=z$

\item $\forall x, y, z \; x=y \wedge x \in z \rightarrow y \in z$

\item $\forall x, y, z \; x=y \wedge z \in x \rightarrow z \in y.$

\end{enumerate}
\end{lemma}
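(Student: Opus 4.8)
The plan is to prove each of the five equality axioms by unwinding the forcing clauses for $=$ and $\in$, and in each case checking both conjuncts demanded by the semantics: the "open-set" part (which mimics the usual topological/Heyting-valued argument) and the "settled" part (the extra clause about $\sigma^r$ being a ground model term). A guiding observation is that for ground model terms the settling operation is the identity and the forcing relation should collapse to ordinary (classical) truth in the ground model; so the settled parts of each axiom reduce to the corresponding trivially-true set-theoretic facts about $\hat{x}$, $\hat{y}$, $\hat{z}$. I would state this reduction as a preliminary sublemma: for any $r$ and any open $J\ni r$, and any terms, $J\Vdash\phi^r$ iff $r$-locally the ground model satisfies $\check{(\phi^r)}$ — or at least the weaker directions actually needed — so that the second conjunct in each clause is automatic.

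First I would handle reflexivity, $\forall x\, x=x$: unwinding $\forall$, I must show $T\Vdash\sigma=\sigma$ for every term $\sigma$, and the settled variant. For $\sigma=\sigma$, the clause asks that for each $\langle\sigma_i,J_i\rangle\in\sigma$ we have $J_i\Vdash\sigma_i\in\sigma$ (immediate, taking the same pair and $J_r=J_i$, using reflexivity inductively for $\sigma_i=\sigma_i$), similarly for the $r$-tagged pairs (here $\langle\sigma_h,r\rangle\in\sigma$ forces $\sigma_h\in\sigma$ on any neighborhood of $r$ via $\sigma_h^{r'}=\dots$), and finally $\sigma^r=\sigma^r$ for all $r$, which holds since $(\sigma^r)^r=\sigma^r$ and we recurse. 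Symmetry (axiom 2) is immediate from the manifest symmetry of the $=$ clause. Transitivity (axiom 3) I would prove by induction on terms, chaining the two "vice versa" halves of the $=$ clause together and using that $\sigma^r=\tau^r$ and $\tau^r=\rho^r$ give $\sigma^r=\rho^r$; one has to be a little careful intersecting the witnessing open sets, but this is the standard computation.

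Axioms 4 and 5 are the substitutivity-of-equals clauses and are the ones to watch. For axiom 5, $x=y\wedge z\in x\rightarrow z\in y$: after the $\forall$ and the $\rightarrow$ clauses are unwound, I must show that whenever $J'\Vdash\sigma=\tau$ and $J'\Vdash\rho\in\sigma$ then $J'\Vdash\rho\in\tau$ — plus the settled version with a local neighborhood $J_r$ and all terms replaced by their $r$-specializations. The open-set part is the familiar argument: from $J'\Vdash\rho\in\sigma$ we get, near each point, some $\langle\sigma_i,J_i\rangle\in\sigma$ with a neighborhood forcing $\rho=\sigma_i$; from $J'\Vdash\sigma=\tau$ we get that same neighborhood forces $\sigma_i\in\tau$; then transitivity (axiom 3, already in hand) and the $\in$-clause combine to give $\rho\in\tau$ locally, hence on all of $J'$. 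Axiom 4 is the mirror image, pushing the equality through on the right-hand side of $\in$, and uses that $J'\Vdash\sigma=\tau$ directly supplies, for each $\langle\sigma_i,J_i\rangle$, that $J'\cap J_i\Vdash\sigma_i\in\tau$.

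The main obstacle I anticipate is bookkeeping around the settled ("$^r$") conjuncts in the implication and universal clauses: one must verify not just that ground model terms behave classically, but that the pointwise-settling commutes appropriately with membership and equality — concretely, that $J\Vdash\rho\in\sigma$ implies, for $r\in J$, a local neighborhood forcing $\rho^r\in\sigma^r$ (or the analogous fact), using $(\sigma^r)^s=\sigma^r$ and the definition of $\sigma^r$ as built from exactly those pairs $\langle\sigma_i,J_i\rangle$ with $r\in J_i$ together with the pairs tagged by $r$. Once that "settling is coherent with $\Vdash$ on ground model terms" sublemma is isolated and proved (by induction on term rank), the settled half of every one of the five axioms follows by transcribing the classical proof of the axiom inside the ground model, and the two halves together give what is claimed. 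I would therefore organize the write-up as: (i) the coherence sublemma; (ii) reflexivity; (iii) symmetry; (iv) transitivity; (v) the two substitutivity axioms, each split into its open part and its settled part.
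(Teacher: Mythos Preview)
Your overall plan is sound and will work, but the paper organizes the argument differently and more economically. Rather than isolate a preliminary ``coherence sublemma'' about how $\Vdash$ interacts with settling on ground model terms, the paper observes that for parts 3--5 it suffices to prove the bare meta-implication (e.g.\ for part~3: if $J\Vdash\rho=\sigma$ and $J\Vdash\sigma=\tau$ then $J\Vdash\rho=\tau$) uniformly for \emph{all} terms $\rho,\sigma,\tau$ and all opens $J$. Since $\rho^r,\sigma^r,\tau^r$ are themselves terms, the settled clauses in the $\forall$ and $\rightarrow$ semantics are then automatic specializations of what has already been shown --- no separate ground-model argument is needed. This sidesteps both the over-strong form of your sublemma (as stated it holds only for $\Delta_0$ formulas with ground model parameters, which is a later lemma in the paper) and any question of ordering between the equality axioms and the coherence facts you want to front-load.

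Two smaller points. First, the paper makes explicit a monotonicity sublemma --- if $J'\subseteq J\Vdash\sigma=\tau$ (resp.\ $\sigma\in\tau$) then $J'$ forces the same --- proved by induction on terms; this is the tool behind the ``intersecting the witnessing open sets'' step you allude to, and you should state it rather than leave it implicit. Second, in your reflexivity sketch the separate handling of the point-tagged pairs $\langle\sigma_h,r_h\rangle$ is unnecessary: the first clause of $J\Vdash\sigma=\tau$ quantifies only over open-tagged pairs $\langle\sigma_i,J_i\rangle$, and the point-tagged pairs enter solely via the literal identity $\sigma^r=\tau^r$, which for reflexivity is just $\sigma^r=\sigma^r$.
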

\begin{proof}

1: It is trivial to show via a simultaneous induction that, for
all $J$ and $\sigma, J \Vdash \sigma = \sigma$, and, for all
$\langle \sigma_{i}, J_{i} \rangle \in \sigma, J \cap J_{i} \Vdash
\sigma_{i} \in \sigma$.

2: Trivial because the definition of $J \Vdash \sigma =_{M} \tau$
is itself symmetric.

3: For this and the subsequent parts, we need a lemma.
\begin{lemma} If $J' \subseteq J \Vdash \sigma =
\tau$ then $J' \Vdash \sigma = \tau$, and similarly for $\in$.
\end{lemma}
\begin{proof} By induction on $\sigma$ and $\tau$.
\end{proof}

Returning to the main lemma, we show that if $J \Vdash \rho =
\sigma$ and $J \Vdash \sigma = \tau$ then $J \Vdash \rho = \tau$,
which suffices. This will be done by induction on terms for all
opens $J$ simultaneously.

For the second clause in $J \Vdash \rho = \tau$, let $r \in J$. By
the hypotheses, second clauses, $\rho^r = \sigma^r$ and $\sigma^r
= \tau^r$, so $\rho^r = \tau^r$.

The first clause of the definition of forcing equality follows by
induction on terms. Starting with $\langle \rho_{i}, J_{i} \rangle
\in \rho$, we need to show that $J \cap J_i \Vdash \rho_i \in
\tau$. We have $J \cap J_{i} \Vdash \rho_{i} \in \sigma$. For a
fixed, arbitrary $r \in J \cap J_{i}$ let $\langle \sigma_{j},
J_{j} \rangle \in \sigma$ and $J' \subseteq J \cap J_{i}$ be such
that $r \in J' \cap J_{j} \Vdash \rho_{i} = \sigma_{j}$. By
hypothesis, $J \cap J_{j} \Vdash \sigma_{j} \in \tau$. So let
$\langle \tau_{k}, J_{k} \rangle \in \tau$ and $\hat{J} \subseteq
J \cap J_{j}$ be such that $r \in \hat{J} \cap J_{k} \Vdash
\sigma_{j} = \tau_{k}$. Let $\tilde{J}$ be $J' \cap \hat{J} \cap
J_{j}$. Note that $\tilde{J} \subseteq J \cap J_{i}$, and that $r
\in \tilde{J} \cap J_{k}$. We want to show that $\tilde{J} \cap
J_{k} \Vdash \rho_{i} = \tau_{k}$. Observing that $\tilde{J} \cap
J_{k} \subseteq J' \cap J_{j}, \hat{J} \cap J_{k}$, it follows by
the previous lemma that $\tilde{J} \cap J_{k} \Vdash \rho_{i} =
\sigma_{j}, \sigma_{j} =_{M} \tau_{k}$, from which the desired
conclusion follows by the induction. So $r \in \tilde{J} \cap
J_{k} \Vdash \rho_{i} \in \tau.$ Since $r \in J \cap J_{i}$ was
arbitrary, $J \cap J_i \Vdash \rho_i \in \tau.$

4: It suffices to show that if $J \Vdash \rho = \sigma$ and $J
\Vdash \rho \in \tau$ then $J \Vdash \sigma \in \tau$. Let $r \in
J$. By hypothesis, let $\langle \tau_i, J_i \rangle \in \tau, J_r
\subseteq J_i$ be such that $r \in J_r \Vdash \rho = \tau_i$;
without loss of generality $J_r \subseteq J$. By the previous
lemma, $J_r \Vdash \rho = \sigma$, and by the previous part of the
current lemma, $J_r \Vdash \sigma = \tau_i$. Hence $J_r \Vdash
\sigma \in \tau$. Since $r \in J$ was arbitrary, we are done.

5: Similar, and left to the reader.
\end{proof}

\begin{lemma} \label{helpfullemma2}
\begin{enumerate}
\item For all $\phi \; \; \emptyset \Vdash \phi$.
\item If $J' \subseteq J \Vdash \phi$ then $J' \Vdash \phi$.
\item If $J_{i}$ $\Vdash \phi$ for all $i$ then $\bigcup_{i} J_{i}
\Vdash \phi$.
\item $J \Vdash \phi$ iff for all $r \in J$ there is a $J_r
\subseteq J$ containing $r$ such that $J_r \Vdash \phi$.
\item For all $\phi, J$ if $J \Vdash \phi$ then for all $r \in J$
there is a neighborhood $J_r$ of $r$ such that $J_r \Vdash
\phi^r$.
\item For $\phi$ bounded (i.e. $\Delta_0$) and having only ground
model terms as parameters, $T \Vdash \phi$ iff $\check{\phi}$
(i.e. V $\models \check{\phi}$).
\end{enumerate}
\end{lemma}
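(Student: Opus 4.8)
The plan is to prove the six parts of Lemma~\ref{helpfullemma2} by a combination of direct induction on formulas and appeal to the clauses of the forcing definition. Parts (1)--(4) should be established first, by simultaneous induction on $\phi$; these are the ``monotonicity and locality'' facts that make the semantics behave like a genuine topological model. Part (1) is immediate from the definition once one observes that every universally quantified ``for all $r \in \emptyset$'' clause is vacuous and the one genuinely universal clause (the first clause of $\forall x$, and the left-hand implication clause of $\rightarrow$) reduces to a statement about $J' \subseteq \emptyset$, hence again about $\emptyset$, so the induction closes. Part (2), restriction/monotonicity, is the analogue of the already-proved sublemma for $=$ and $\in$ (the ``previous lemma'' in the proof of Lemma~\ref{equalitylemma2}); I would push the same induction through the propositional and quantifier clauses, noting that each clause is phrased either as a ``for all $r \in J$ there is a neighborhood'' statement (which only gets easier on a smaller $J$, since the witnessing $J_r$ can be reused once intersected with $J'$) or as a ``for all $J'' \subseteq J$'' statement (which only quantifies over fewer opens when $J$ shrinks).

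Part (3), the covering/union property, again goes by induction on $\phi$, and is where the two-part structure of the definition pays off: the ``local'' half of each clause (the ``for all $r$'' part) transfers to $\bigcup_i J_i$ trivially because any $r \in \bigcup_i J_i$ already lies in some $J_i$ and we reuse that witness; the ``global'' half (e.g.\ the first conjunct of the $\forall x$ clause, or the $J'' \subseteq J$ half of $\rightarrow$) needs part (2) to restrict a hypothesis down to $J_i$ and then part (3) inductively to glue the conclusions back up. Part (4) is then essentially a restatement: one direction is part (2), and the other is the special case of part (3) with the cover $\{J_r : r \in J\}$. With (1)--(4) in hand, part (5) is the key new ingredient and, I expect, \textbf{the main obstacle}: it asserts that forcing $\phi$ on $J$ forces the ``settled'' formula $\phi^r$ on a neighborhood of each $r \in J$. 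For most connectives this is built directly into the second conjunct of the corresponding forcing clause (look at $\rightarrow$ and $\forall$, which each carry an explicit ``$\phi^r$''/``$\psi^r$'' clause precisely for this purpose); the work is in the atomic cases and in checking that the inductively supplied neighborhoods can be intersected down to a single one. For $\sigma = \tau$ we have $\sigma^r = \tau^r$ for every $r \in J$ by the second clause of $=$, and since $\sigma^r, \tau^r$ are ground model terms, $(\sigma=\tau)^r$ is $\sigma^r = \tau^r$, which $T$ — hence any neighborhood of $r$ — forces by Lemma~\ref{equalitylemma2}(1); the $\in$ case is handled similarly using the first atomic clause to produce the witnessing pair and neighborhood, then settling it. The subtlety to watch is that in the $\forall x$ clause the bound variable is \emph{not} settled (as the parenthetical remark after the forcing definition stresses), so when passing to $\phi^r$ one must feed the inductive hypothesis the already-settled instance and keep careful track of which terms carry the superscript.

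Finally, part (6) — that for $\Delta_0$ formulas with ground-model parameters, $T \Vdash \phi$ iff $V \models \check\phi$ — I would prove by induction on the $\Delta_0$ formula $\phi$. The atomic cases $\hat x = \hat y$ and $\hat x \in \hat y$ reduce, via the forcing clauses and the observation that $\hat x^{\,r} = \hat x$ for every $r$, to the corresponding ground-model facts (here one uses the computation that $T \Vdash \hat x = \hat y$ iff $x = y$, which is a quick induction of the same flavor as Lemma~\ref{equalitylemma2}(1), together with the trivial direction from the definition of $\hat{}$). Conjunction and disjunction are immediate from parts (3)/(4) — note $\vee$ needs the covering property to go from a pointwise disjunction back to a global one, which is fine since $T$ is the cover of itself by the $J_r$'s and each $J_r \Vdash \hat\phi$ or $\hat\psi$ can be pushed back up. Implication uses part (2) for the forward direction and, for the converse, the fact that once $\check\phi \to \check\psi$ holds in $V$, \emph{every} $J' \subseteq T$ that forces $\phi$ forces $\psi$ (by induction, $J'$ forcing $\hat\phi$ entails $V \models \check\phi$ when $\phi$ is $\Delta_0$ — but caution: this last step silently needs that forcing $\hat\phi$ on a \emph{nonempty} $J'$ already pins down the ground-model truth value, which is exactly where one reuses the atomic computations and the fact that $\hat\phi$'s are ``rigid'' under settling). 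The bounded quantifier cases $\exists x \in \hat y$ and $\forall x \in \hat y$ unwind to finite disjunctions/conjunctions over the finitely-presented (in the relevant sense) members of $\hat y$, so they reduce to the already-handled propositional cases. Throughout, the recurring mantra is that ground-model terms are fixed points of settling, so the ``second clauses'' of the forcing definition add nothing new and the semantics collapses to ordinary truth.
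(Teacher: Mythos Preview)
Your approach matches the paper's, which is itself quite terse: parts 1--4 and 6 are each dismissed in a sentence or two, and only part 5 gets the case analysis written out, essentially as you describe (with the $\rightarrow$ and $\forall$ cases handled by the built-in second clauses, and the atomic cases reduced via $\sigma^r = \tau^r$ to Lemma~\ref{equalitylemma2}(1)).

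One slip worth correcting in your sketch of part (6): the bounded quantifiers $\exists x \in \hat y\,\phi$ and $\forall x \in \hat y\,\phi$ do \emph{not} unwind to finite disjunctions or conjunctions --- the set $y$ may well be infinite (take $y = \omega$) --- so the promised reduction to ``the already-handled propositional cases'' does not go through as stated. The correct step is the one you already gesture at elsewhere: anything forced into $\hat y$ is locally forced equal to some $\hat z$ with $z \in y$, and since $\hat z$ is again a ground-model term the inductive hypothesis applies directly to $\phi(\hat z)$. For this to close the loop in the $\exists$ and $\rightarrow$ cases you need, as you correctly flag in your parenthetical caution, the slightly stronger form of (6) in which $T$ is replaced by an arbitrary nonempty open; this is what the induction naturally proves, and the paper in fact invokes exactly this stronger form later (in the proof of Bounded Separation).
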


\begin{proof}

1. Trivial induction. This part is not used later, and is
mentioned here only to flesh out the picture.

2. Again, a trivial induction. The base cases, = and $\in$, are
proven by induction on terms, as mentioned just above.

3. By induction. For the case of $\rightarrow$, you need to invoke
the previous part of this lemma. All other cases are
straightforward.

4. Trivial, using 3.

5. By induction on $\phi$.

=: If $r \in J \Vdash \sigma = \tau$ then $\sigma^r = \tau^r$. By
the proof of the first part of the equality lemma, $T \Vdash
\sigma^r = \tau^r$.

$\in$: If $r \in J \Vdash \sigma \in \tau$, let $\tau_i, J_i,$ and
$J_r$ be as given by the definition of forcing $\in$. Inductively,
some neighborhood of $r$ (or, by the previous case, $T$ itself)
forces $\sigma^r = \tau_i^r$. Since $\langle \tau_i^r, T \rangle
\in \tau^r, \; T \Vdash \tau_i^r \in \tau^r$, and $T \Vdash
\sigma^r \in \tau^r$.

$\vee$: If $r \in J \Vdash \phi \vee \psi$, suppose without loss
of generality that $r \in J_r \Vdash \phi$. Inductively let $K_r$
be a neighborhood of $r$ forcing $\phi^r$. Then $K_r \Vdash \phi^r
\vee \psi^r$.

$\wedge$: If $r \in J \Vdash \phi \wedge \psi$, let $J_r$ and
$K_r$ be neighborhoods of $r$ such that $J_r \Vdash \phi$ and $K_r
\Vdash \psi$. Then $J_r \cap K_r$ is as desired.

$\rightarrow$: If $r \in J \Vdash \phi \rightarrow \psi$, then
$J_r$ as given in the definition of forcing $\rightarrow$
suffices. (To verify the second clause in the definition of $J_r
\Vdash \phi^r \rightarrow \psi^r$, use the fact that $(\phi^r)^s =
\phi$ and $(\psi^r)^s = \psi$.)

$\exists$: If $r \in J \Vdash \exists x \; \phi(x)$, let $J_r
\subseteq J$ and $\sigma$ be such that $r \in J_r \Vdash
\phi(\sigma)$. By induction, let $K_r$ be such that $r \in K_r
\Vdash \phi^r(\sigma^r)$. So $K_r \Vdash \exists x \; \phi^r(x)$.

$\forall$: If $r \in J \Vdash \forall x \; \phi(x)$, then $J_r$ as
given by the definition of forcing $\forall$ suffices.

6. A simple induction.
\end{proof}

At this point, we are ready to show what is in general forced
under this semantics.
\begin{theorem} $T$ forces:
\begin{description}
\item [Infinity]
\item [Pairing]
\item [Union]
\item [Extensionality]
\item [Set Induction]
\item [Eventual Power Set]
\item [Bounded ($\Delta_0$) Separation]
\item [Collection]
\end{description}
\end{theorem}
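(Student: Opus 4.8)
The plan is to treat all eight principles by a single two‑step template, reading off the two conjuncts that appear in the clauses for $\forall$ and $\exists$. For a $\Pi$‑shaped axiom $\forall \vec x\,\theta$ one proves (i) the \emph{unsettled} statement $T\Vdash\theta(\vec\sigma)$ for every tuple of terms $\vec\sigma$, which is essentially the verification familiar from Heyting‑valued models over $T$; and (ii) the \emph{settled} statement, which for each $r\in T$ asks for a neighborhood $J_r$ of $r$ with $J_r\Vdash\theta^r(\vec\sigma)$ for all $\vec\sigma$ (the parameters of $\theta$ settled, the bound variables not). Parts (5) and (6) of Lemma \ref{helpfullemma2} are what make (ii) tractable: after settling, the formulas in play are bounded with ground model parameters, so their forcing is decided and coincides with truth in $V$, where the corresponding ZF axiom holds. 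Throughout I use the following consequence of Lemma \ref{helpfullemma2}(6), monotonicity, and a short induction over $\vee$ and bounded $\exists$ (an inhabited union has an inhabited piece, an $\in$‑ or $=$‑atom among ground model terms is settled by the identity): if $\phi$ is bounded with only ground model parameters and $J$ is inhabited, then $J\Vdash\phi \iff V\models\check\phi \iff T\Vdash\phi$.

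\textbf{Infinity, Pairing, Union.} For Infinity take $\widehat\omega$; since $\widehat x^{\,r}=\widehat x$, both clauses collapse, via Lemma \ref{helpfullemma2}(6), to the validity of Infinity in $V$. For Pairing of $\sigma,\tau$ take $\{\langle\sigma,T\rangle,\langle\tau,T\rangle\}$, whose $r$‑settling is the pair of $\sigma^r,\tau^r$; for the settled $\forall$‑clause one pairs $\sigma^r$ with the (unsettled) $\tau$, per the remark after the definition of $\Vdash$. For Union of $\sigma=\{\langle\sigma_i,J_i\rangle\}_i\cup\{\langle\sigma_h,r_h\rangle\}_h$ take $\{\langle\mu,\,J_i\cap K\rangle \mid \langle\sigma_i,J_i\rangle\in\sigma,\ \langle\mu,K\rangle\in\sigma_i\}$ together with exactly the pairs $\langle\mu,r\rangle$ needed so that the $r$‑settling of the whole term is $\bigcup(\sigma^r)$; the forcing conditions are then immediate. \textbf{Extensionality:} given $\sigma,\tau$ and an inhabited $J$ with $J\Vdash\forall z(z\in\sigma\leftrightarrow z\in\tau)$, the first clause of $J\Vdash\sigma=\tau$ comes, as usual, from instantiating the biconditional at the $\sigma_i$ and $\tau_j$ using $J\cap J_i\Vdash\sigma_i\in\sigma$; the second clause, $\sigma^r=\tau^r$ for $r\in J$, comes from the settled clause of the hypothesis, instantiated at $\widehat y$ for $y$ ranging over $\check{\sigma^r}$ and $\check{\tau^r}$, plus the auxiliary observation and Extensionality in $V$. \textbf{Set Induction:} the unsettled clause is proved by induction on the (von Neumann) rank of the term witnessing $\forall x\,\phi(x)$; the settled clause reduces to $\in$‑induction in $V$ via Lemma \ref{helpfullemma2}(5),(6). \textbf{Collection:} from an inhabited $J\Vdash\forall x\in\sigma\,\exists y\,\phi(x,y)$, for each $\langle\sigma_i,J_i\rangle\in\sigma$ and each point of $J\cap J_i$ extract a neighborhood and a witnessing term, gather all of them into a set using Collection in $V$, and assemble them — together with settling components obtained by first applying Lemma \ref{helpfullemma2}(5) to replace each settled witness by a ground model term — into one term $\beta$; the union lemma (Lemma \ref{helpfullemma2}(3)) then yields $J\Vdash\exists b\,\forall x\in\sigma\,\exists y\in b\,\phi(x,y)$, and the settled clause is the analogous reduction to Collection in $V$.

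\textbf{Bounded Separation.} Given $\sigma$ and bounded $\phi(x)$, let $O_\psi$ denote the largest open forcing $\psi$ (Lemma \ref{helpfullemma2}(3)) and put
\[
\tau \;=\; \bigl\{\langle\sigma_i,\, J_i\cap O_{\phi(\sigma_i)}\rangle \mid \langle\sigma_i,J_i\rangle\in\sigma\bigr\}\; \cup\; \bigl\{\langle\rho,\, r\rangle \mid \langle\rho,r\rangle\in\sigma,\ T\Vdash\phi^{r}(\rho^{r})\bigr\}\; \cup\; \bigl\{\langle\sigma_i,\, r\rangle \mid \langle\sigma_i,J_i\rangle\in\sigma,\ r\in J_i,\ T\Vdash\phi^{r}(\sigma_i^{\,r})\bigr\}.
\]
The last clause is the twist: at an $r$ on the boundary of $O_{\phi(\sigma_i)}$ one may have $T\Vdash\phi^r(\sigma_i^{\,r})$ without $r\in O_{\phi(\sigma_i)}$, yet we need $\sigma_i^{\,r}\in\tau^r$ in that case. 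With this term $\tau^r$ names precisely the genuine $V$‑separation of $\check{\sigma^r}$ by $\check{\phi^r}$. The unsettled clause of $T\Vdash\forall x(x\in\tau\leftrightarrow x\in\sigma\wedge\phi(x))$ is then routine from the definition of $\Vdash$, soundness, and Lemma \ref{equalitylemma2}; the settled clause reduces, via the auxiliary observation, to the description of $\tau^r$ just given. Boundedness of $\phi$ is used exactly here: for unbounded $\phi$ the settled $\phi^r$ still mentions arbitrary (including variable) terms through its quantifiers, so $\tau^r$ cannot be defined in $V$, and this route to full Separation genuinely breaks down.

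\textbf{Eventual Power Set} is where settling is really exploited, and it is the step I expect to be the main obstacle. Given $\sigma$, take $\gamma$ with empty open part (so that $\check{\gamma^r}$ is exactly $\mathcal P(\check{\sigma^r})$; adding more to the open part is harmless but unnecessary), and adjoin as settling components the pairs $\langle\widehat z,\, r\rangle$ for every $r\in T$ and every $z\subseteq\check{\sigma^r}$ — a set, by Power Set, Union and Replacement in $V$. The first conjunct, $\forall Y\in\gamma\,Y\subseteq\sigma$, holds because no inhabited open forces any term into $\gamma$, while its settled clause reduces (auxiliary observation) to the triviality that every member of $\mathcal P(\check{\sigma^r})$ is $\subseteq\check{\sigma^r}$. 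For the second conjunct, $\forall Y\subseteq\sigma\,\neg\forall Z\in\gamma\,Y\neq Z$, fix a term $\rho$ and an inhabited $J$ with $J\Vdash\rho\subseteq\sigma$, and suppose some inhabited $J'\subseteq J$ forced $\forall Z\in\gamma\,\rho\neq Z$; fix $r\in J'$. Since $\subseteq$ is bounded, the settled clause of $J'\Vdash\rho\subseteq\sigma$ together with the auxiliary observation gives $\check{\rho^r}\subseteq\check{\sigma^r}$; but $\rho^r$ is a ground model term, i.e.\ $\rho^r=\widehat{\check{\rho^r}}$, so $\langle\rho^r,r\rangle\in\gamma$ and hence $T\Vdash\rho^r\in\gamma^r$. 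The settled clause of $J'\Vdash\forall Z\in\gamma\,\rho\neq Z$ yields a neighborhood of $r$ forcing $\rho^r\in\gamma^r\rightarrow\neg(\rho^r=\rho^r)$; since $T\Vdash\rho^r=\rho^r$ (Lemma \ref{equalitylemma2}), that neighborhood forces $\bot$, which is impossible as it contains $r$. The remaining settled clauses — of the second conjunct, of the outermost $\exists C$, and of $\forall X$ — follow by the same argument applied to the already‑settled data, using $(\sigma^r)^s=\sigma^r$. The heart of the matter, and where care is genuinely needed, is the design of the settling component of $\gamma$ so that at each point it contains canonical names for all ground model subsets of the settling of $\sigma$ there; this is precisely the mechanism the introduction invokes to explain why a variable subset, once settled, can no longer be kept distinct from every member of the candidate power set.
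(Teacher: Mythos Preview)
Your constructions are essentially the paper's: $\hat\omega$ for Infinity, $\{\langle\sigma,T\rangle,\langle\tau,T\rangle\}$ for Pairing, the same four-part union term, the same Separation term (your three clauses regroup the paper's two), and exactly the same $\gamma$ for Eventual Power Set (the paper writes it as $\{\langle\hat x,r\rangle\mid \sigma^r=\hat s\wedge x\subseteq s\}$). Your EPS argument via the settled clause and $Z=\rho^r$ is the paper's, and your observation that the empty open part already gives $\forall Y\in\gamma\,Y\subseteq\sigma$ is correct (the paper instead defers that conjunct to Bounded Separation).

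There is one genuine slip in your template. You say that for the settled clause ``the formulas in play are bounded with ground model parameters,'' and invoke Lemma \ref{helpfullemma2}(6) to reduce to $V$. That is fine for Infinity, Pairing, Union, Extensionality, and Bounded Separation, but it is \emph{false} for Set Induction and Collection: these are schemata over arbitrary $\phi$, and $\phi^r$ need not be bounded, so part (6) does not apply. The correct reduction---and what the paper does---is simply that the settled clause of the outer implication is an instance of the unsettled clause with $\phi^r$ in place of $\phi$ (and $K$ in place of $J$); no appeal to truth in $V$ is needed or available. Your one-line ``reduces to $\in$-induction in $V$'' for Set Induction and ``analogous reduction to Collection in $V$'' should be replaced by this observation.

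Relatedly, your Set Induction sketch is too thin. Saying ``induction on the rank of the term witnessing $\forall x\,\phi(x)$'' hides real work: to conclude $J\Vdash\forall x\,\phi(x)$ you must verify \emph{both} clauses of $\forall$, and a minimal-rank counterexample can arise from either. The paper first rules out the failure of the second clause (picking $r$, then $J'$ from the hypothesis's settled clause, then a minimal-rank $\sigma$ with $J'\not\Vdash\phi^r(\sigma)$, and showing $J'\Vdash\forall y\in\sigma\,\phi^r(y)$), and only then handles the first clause using the negation of the second. In both cases one must also check the settled sub-clauses of the bounded quantifier $\forall y\in\sigma$, using that $\mathrm{rk}(\sigma^s)\le\mathrm{rk}(\sigma)$. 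None of this is hard, but your sketch does not indicate you have seen the two-case structure.
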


Some comments on this choice of axioms are in order. The first
five are unremarkable. The role of Eventual Power Set was
discussed in the Introduction. The restriction of Separation to
the $\Delta_0$ case should be familiar, as that is also the case
in CZF and KP. By way of compensation, the version of Collection
in CZF is Strong Collection: not only does every total relation
with domain a set have a bounding set (regular Collection), but
that bounding set can be chosen so that it contains only elements
related to something in the domain (the strong version). In the
presence of full Separation, these are equivalent, as an
appropriate subset of any bounding set can always be taken.
Unfortunately, even the additional hypotheses provided by
Collection are not enough in the current context to yield even
this modest fragment of Separation, as will actually be shown at
the beginning of the next section. In fact, even Replacement
fails, as we will see.

\begin{proof}

\begin{itemize}
\item Infinity: $\hat \omega$ will do.
(Recall that the canonical name $\hat{x}$ of any set $x$ from the
ground model is defined inductively as $\{ \langle \hat{y}, T
\rangle \mid y \in x \}.)$
\item Pairing: Given $\sigma$ and $\tau$, $\{\langle \sigma, T \rangle ,
\langle \tau, T \rangle \}$ will do.
\item Union: Given $\sigma$, the union of the following four terms
will do:
\begin{itemize}
\item$\{ \langle \tau, J \cap J_i \rangle
\mid$ for some $\sigma_i, \; \langle \tau, J \rangle \in \sigma_i$
and $\langle \sigma_i, J_i \rangle \in \sigma \}$
\item $\{ \langle \tau, r \rangle \mid$ for some $\sigma_i, \;
\langle \tau, r \rangle \in \sigma_i$ and $\langle \sigma_i, r
\rangle \in \sigma \}$
\item $\{ \langle \tau, r \rangle \mid$ for some $\sigma_i$ and $K, \;
\langle \tau, K \rangle \in \sigma_i, \; r \in K,$ and $\langle
\sigma_i, r \rangle \in \sigma \}$
\item $\{ \langle
\tau, r \rangle \mid$ for some $\sigma_i$ and $K, \; \langle \tau,
r \rangle \in \sigma_i, \; r \in K,$ and $\langle \sigma_i, K
\rangle \in \sigma \}$.
\end{itemize}

\item Extensionality: We need to show that
\begin {equation}
T \Vdash \forall x \; \forall y \; [\forall z \; (z \in x
\leftrightarrow z \in y) \rightarrow x = y].
\end {equation}
It suffices to show that for any terms $\sigma$ and $\tau$,
\begin {equation}
T \Vdash \forall z \; (z \in \sigma \leftrightarrow z \in \tau)
\rightarrow \sigma = \tau.
\end {equation}
(Although that is only the first clause in forcing $\forall$, it
subsumes the second, because $\sigma$ and $\tau$ could have been
chosen as ground model terms in the first place.) To show that,
for the second clause in forcing $\rightarrow$, it suffices to
show that
\begin {equation}
T \Vdash \forall z \; (z \in \sigma^r \leftrightarrow z \in
\tau^r) \rightarrow \sigma^r = \tau^r.
\end {equation}
But, as before, this is already subsumed by choosing $\sigma$ and
$\tau$ to be ground model terms in the first place. Hence it
suffices to check the first clause in forcing $\rightarrow$:
\begin {equation}
\forall J [J \Vdash \forall z \; (z \in \sigma \leftrightarrow z
\in \tau)] \rightarrow [J \Vdash \sigma = \tau].
\end {equation}

To this end, let $\langle \sigma_i, J_i \rangle$ be in $\sigma$;
we need to show that $J \cap J_i \Vdash \sigma_i \in \tau$. By the
choice of $J$, $J \Vdash \sigma_i \in \sigma \leftrightarrow
\sigma_i \in \tau$. In particular, $J \Vdash \sigma_i \in \sigma
\rightarrow \sigma_i \in \tau$. By \ref{helpfullemma2}, part 2),
$J \cap J_i \Vdash \sigma_i \in \sigma \rightarrow \sigma_i \in
\tau$. Since $J \cap J_i \Vdash \sigma_i \in \sigma$ (proof of
\ref{equalitylemma2}, part 1)), $J \cap J_i \Vdash \sigma_i \in
\tau$. Symmetrically for $\langle \tau_i, J_i \rangle \in \tau$.

Also, let $r \in J$. If $\sigma^r \not = \tau^r$, let $\langle
\rho, T \rangle$ be in their symmetric difference. By the choice
of $J$, for some neighborhood $J_r$ of $r$, $J_r \Vdash \rho \in
\sigma^r \leftrightarrow \rho \in \tau^r$. This contradicts the
choice of $\rho$. So $\sigma^r = \tau^r$.

\item Set Induction (Schema): We need to show that
\begin {equation}
T \Vdash \forall x \; ((\forall y \in x \; \phi(y)) \rightarrow
\phi(x)) \rightarrow \forall x \; \phi(x).
\end {equation}
The statement in question is an implication. The definition of
forcing $\rightarrow$ contains two clauses.

The first clause is that, for any open set $J$ and formula $\phi$,
if $J \Vdash \forall x (\forall y \in x \; \phi(y) \rightarrow
\phi(x))$ then $J \Vdash \forall x \; \phi(x)$. By way of proving
that, suppose not. Let $J$ and $\phi$ provide a counter-example.
By hypothesis,
\begin{equation} \label{forall1}
\forall \sigma \; J \Vdash \forall y \in \sigma \; \phi(y)
\rightarrow \phi(\sigma)
\end{equation}
and
\begin{equation} \label{forall2}
\forall r \in J \; \exists J' \ni r \; \forall \sigma' \; J'
\Vdash \forall y \in \sigma' \; \phi^r(y) \rightarrow
\phi^r(\sigma').
\end{equation}
Since J $\not \Vdash \forall x \phi(x)$, either
\begin{equation} \label{notforall1}
\exists \sigma \; J \not \Vdash \phi(\sigma)
\end{equation}
or
\begin{equation} \label{notforall2}
\exists r \in J \; \forall J' \ni r \; \exists \sigma' \; J' \not
\Vdash \phi^r(\sigma').
\end{equation}
If (\ref {notforall2}) holds, let $r$ as given by (\ref
{notforall2}), and then let $J'$ be as given by (\ref {forall2})
for that $r$. By (\ref {notforall2}), $\exists \sigma' \; J' \not
\Vdash \phi^r(\sigma')$; let $\sigma$ be such a $\sigma'$ -- so
$J' \not \Vdash \phi^r(\sigma)$ -- of minimal V-rank. By (\ref
{forall2}), we have $J' \Vdash \forall y \in \sigma \; \phi^r(y)
\rightarrow \phi^r(\sigma).$ If we can show that $J' \Vdash
\forall y \in \sigma \; \phi^r(y)$, then (by the definition of
forcing $\rightarrow$) we will have a contradiction, showing that
(\ref {notforall2}) must fail.

To that end, we must show, unpacking the abbreviation, that $J'
\Vdash \forall y (y \in \sigma \rightarrow \phi^r(y))$; that is,
\begin{equation}
\forall \tau \; J' \Vdash \tau \in \sigma \rightarrow \phi^r(\tau)
\end{equation}
and
\begin{equation}
\forall s \in J' \; \exists K \ni s \; \forall \tau \; K \Vdash
\tau \in \sigma^s \rightarrow \phi^r(\tau),
\end{equation}
the latter because $(\phi^r)^s = \phi^r$.

By way of showing (10), suppose $J' \supseteq K \Vdash \tau \in
\sigma$. Then $K$ can be covered with open sets $K_i$ such that
$K_i \Vdash \tau = \sigma_i$ and $K_i \subseteq J_i$ where
$\langle \sigma_i, J_i \rangle \in \sigma$. Since $\sigma_i$ has
strictly lower V-rank than $\sigma$, $J' \Vdash \phi^r(\sigma_i)$.
Hence $K_i \Vdash \phi^r(\tau)$. Since the $K_i$s cover $K$ (by
lemma \ref{helpfullemma2}, part 3)) $K$ forces the same. We still
have to show that for all $s \in J'$ there is a $K \ni s$ such
that for all $K' \subseteq K$ if $K' \Vdash \tau^s \in \sigma^s$
then $K' \Vdash \phi^r(\tau^s)$. In fact, $J'$ suffices for $K$:
if $J' \supseteq K' \Vdash \tau^s \in \sigma^s$ then $K' \Vdash
\phi^r(\tau^s)$. Moreover, this is the same argument as the one
just completed, with $\sigma^s$ in place of $\sigma$. The only
minor observation that bears making is that the V-rank of
$\sigma^s$ is less than or equal to that of $\sigma$, so again
when $\tau$ is forced to be a member of $\sigma^s$ its V-rank is
strictly less than that of $\sigma$, so the choice of $\sigma$
carries us through.

To show (11), we claim that $J'$ suffices for the choice of $K$:
$J' \Vdash \tau \in \sigma^s \rightarrow \phi^r(\tau)$. Once more,
this is just (10), with $\sigma^s$ in place of $\sigma$.

This completes the proof that (\ref {notforall2}) must fail. Hence
we have that the negation of (\ref {notforall2}) must hold, namely
\begin{equation}
\forall r \in J \; \exists J' \ni r \; \forall \sigma' \; J'
\Vdash \phi^r(\sigma'),
\end{equation}
as well as (\ref {notforall1}). Let $\sigma$ be of minimal V-rank
such that $J \not \Vdash \phi(\sigma)$. If we can show that $J
\Vdash \forall y \in \sigma \; \phi(y)$, then by (\ref {forall1})
we will have a contradiction, completing the proof of the first
clause.

What we need to show are
\begin{equation}
\forall \tau \; J \Vdash \tau \in \sigma \rightarrow \phi(\tau)
\end{equation}
and
\begin{equation}
\forall r \in J \; \exists J' \ni r \; \forall \tau \; J' \Vdash
\tau \in \sigma^r \rightarrow \phi^r(\tau).
\end{equation}
By way of showing (13), suppose $J \supseteq K \Vdash \tau \in
\sigma$; we need to show that $K \Vdash \phi(\tau)$. This is the
same argument, based on the minimality of $\sigma$, as in the
proof of (10). The other part of showing (13) is
\begin{equation}
\forall r \in J \; \exists J' \ni r \; \forall K \subseteq J' \;
(K \Vdash \tau^r \in \sigma^r \Rightarrow K \Vdash
\phi^r(\tau^r)).
\end{equation}
Both (14) and (15) are special cases of (12).

This completes the proof of the first clause.

The second clause is that for all $r \in T$ there is a $J \ni r$
such that for all $K \subseteq J$ if $K \Vdash \forall x \;
((\forall y \in x \; \phi^r(y)) \rightarrow \phi^r(x))$ then $K
\Vdash \forall x \; \phi^r(x)$. For any $r$, let $J$ be $T$. Then
what remains of the claim has exactly the same form as the first
clause, with $K$ and $\phi^r$ for $J$ and $\phi$ respectively.
Since the validity of this first clause was already shown for all
choices of $J$ and $\phi$, we are done.

\item Eventual Power Set: We need to show that
\begin {equation}
T \Vdash \forall X \; \exists C \; \forall Y \; (Y \subseteq X
\rightarrow \neg \forall Z (Z \in C \rightarrow Y \not = Z)).
\end {equation}
(Actually, we must also produce a $C$ that contains only subsets
of $X$. However, to extract such a sub-collection from any $C$ as
above is an instance of Bounded Separation, the proof of which
below does not rely on the current proof. So we will make our
lives a little easier and prove the version of EPS as stated.)
Since the sentence forced has no parameters, the second clause in
forcing $\forall$ is subsumed by the first, so all we must show is
that, for any term $\sigma$,
\begin {equation}
T \Vdash \exists C \; \forall Y \; (Y \subseteq \sigma \rightarrow
\neg \forall Z (Z \in C \rightarrow Y \not = Z)).
\end {equation}

Let $\tau = \{ \langle \hat{x}, r \rangle \mid \sigma^r = \hat{s}
\wedge x \subseteq s \}$. This is the desired $C$. It suffices to
show that
\begin {equation}
T \Vdash \forall Y \; (Y \subseteq \sigma \rightarrow \neg \forall
Z (Z \in \tau \rightarrow Y \not = Z)).
\end {equation}

For the first clause in forcing $\forall$, we need to show that
\begin {equation}
T \Vdash \rho \subseteq \sigma \rightarrow \neg \forall Z (Z \in
\tau \rightarrow \rho \not = Z).
\end {equation}
To do that, first suppose $T \supseteq J \Vdash \rho \subseteq
\sigma$. (Note that that implies that for all $s \in J$ $T \Vdash
\rho^s \subseteq \sigma^s$, so that $\langle \rho^s, s \rangle \in
\tau$, and $T \Vdash \rho^s \in \tau^s$.) We must show that
\begin {equation}
J \Vdash \neg \forall Z (Z \in \tau \rightarrow \rho \not = Z).
\end {equation}
It suffices to show that no non-empty subset $K$ of $J$ forces
\begin {equation}
\forall Z (Z \in \tau \rightarrow \rho \not = Z) \end {equation}
or \begin {equation} \forall Z (Z \in \tau^r \rightarrow \rho^r
\not = Z) \end {equation} ($r \in J$). For the former, we will
show that $K$ must violate the second clause in forcing $\forall$.
Let $s \in K$. Letting $Z$ be $\rho^s$, as just observed, all of
$T$ will force $Z \in \tau^s$ but nothing will force $\rho^s \not
= Z$. Similarly for the latter, by choosing $Z$ to be $\rho^r$. To
finish forcing the implication, it suffices to show that for all
$r$ \begin {equation} T \Vdash \rho^r \subseteq \sigma^r
\rightarrow \neg \forall Z (Z \in \tau^r \rightarrow \rho^r \not =
Z). \end {equation} Again, it suffices to let $Z$ be $\rho^r$.

For the second clause in forcing $\forall$, for $r \in T$ and
$\rho$ a term, it suffices to show that \begin {equation} T \Vdash
\rho \subseteq \sigma^r \rightarrow \neg \forall Z (Z \in \tau^r
\rightarrow \rho \not = Z). \end {equation} This time letting $Z$
by any $\rho^s$ suffices.

\item Bounded Separation: The important point here is that, for
$\phi$ bounded ($\Delta_0$) with only ground model terms, $J
\Vdash \phi$ iff $T \Vdash \phi$ iff $V \models \check{\phi}$
(\ref{helpfullemma2}, part 6).

We need to show that \begin {equation} T \Vdash \forall X \;
\exists Y \; \forall Z \; (Z \in Y \leftrightarrow Z \in X \wedge
\phi(Z)). \end {equation} This means, first, that for any
$\sigma$, \begin {equation} T \Vdash \exists Y \; \forall Z \; (Z
\in Y \leftrightarrow Z \in \sigma \wedge \phi(Z)), \end
{equation} and, second, for any $r \in T$ there is a $J \ni r$
such that, for any $\sigma$, \begin {equation} J \Vdash \exists Y
\; \forall Z \; (Z \in Y \leftrightarrow Z \in \sigma \wedge
\phi^r(Z)). \end {equation} In the second part, choosing $J$ to be
$T$, we have an instance of the first part, so it suffices to
prove the first only.

Let $\tau$ be $\{ \langle \sigma_i, J \cap J_i \rangle \mid
\langle \sigma_i, J_i \rangle \in \sigma$ and $J \Vdash
\phi(\sigma_i) \} \cup \{ \langle \hat{x}, r \rangle \mid \langle
\hat{x}, T \rangle \in \sigma^r$ and $T \Vdash \phi^r(\hat{x})
\}$. We claim that $\tau$ suffices: $T \Vdash \forall Z \; (Z \in
\tau \leftrightarrow Z \in \sigma \wedge \phi(Z))$.

First, let $\rho$ be a term. We need to show that $T \Vdash \rho
\in \tau \leftrightarrow \rho \in \sigma \wedge \phi(\rho)$.
Unraveling the bi-implication and the definition of forcing an
implication, that becomes $J \Vdash \rho \in \tau$ iff $J \Vdash
\rho \in \sigma \wedge \phi(\rho)$, and $J \Vdash \rho^r \in
\tau^r$ iff $J \Vdash \rho^r \in \sigma^r \wedge \phi^r(\rho^r)$.
The first iff should be clear from the first part of the
definition of $\tau$ and the second iff from the second part of
the definition, along with the observation that forcing
$\phi^r(\rho^r)$ is independent of $J$.

We also need, for each $r \in T$, a $J \ni r$ such that for all
$\rho$ $J \Vdash \rho \in \tau^r \leftrightarrow \rho \in \sigma^r
\wedge \phi^r(\rho)$. Choosing $J$ to be $T$ and unraveling as
above (recycling the variable $J$) yields $J \Vdash \rho \in
\tau^r$ iff $J \Vdash \rho \in \sigma^r \wedge \phi^r(\rho)$, and
$J \Vdash \rho^s \in \tau^r$ iff $J \Vdash \rho^s \in \sigma^r
\wedge \phi^r(\rho^s)$. These hold because the only things that
can be forced to be in $\tau^r$ or $\sigma^r$ are (locally) images
of ground model terms, and the truth of $\phi^r$ evaluated at such
a term is independent of $J$.

\item Collection: Since only regular, not strong, Collection is
true here,  it would be easiest to his this with a sledgehammer:
reflect V to some set M large enough to contain all the parameters
and capture the truth of the assertion in question; the term
consisting of the whole universe according to M will be more than
enough. It is more informative, though, to follow through the
natural construction of a bounding set, so we can highlight in the
next section just what goes wrong with the proof of Strong
Collection.

We need \begin {equation} T \Vdash \forall x \in \sigma \; \exists
y \; \phi(x,y) \rightarrow \exists z \; \forall x \in \sigma \;
\exists y \in z \; \phi(x,y). \end {equation} It suffices to show
that for any $J$ \begin {equation} (J \Vdash \forall x \in \sigma
\; \exists y \; \phi(x,y)) \rightarrow (J \Vdash \exists z \;
\forall x \in \sigma \; \exists y \in z \; \phi(x,y)), \end
{equation} and the same relativized to $r$. The latter is a
special case of the former, so it suffices to show just the
former.

By hypothesis, for each $\langle \sigma_i, J_i \rangle \in \sigma$
and $r \in J_i \cap J$ there are $\tau_{ir}$ and $J_{ir} \subseteq
J_i \cap J$, $J_{ir} \ni r$ such that $J_{ir} \Vdash
\phi(\sigma_i, \tau_{ir})$. Also, for all $r \in J$ there is a
$J_r \ni r$ such that, for all $\langle \hat{x}, T \rangle \in
\sigma^r$, $J_r \Vdash \exists y \; \phi^r(\hat{x},y)$. For each
$s \in J_r$, let $\tau_{r\hat{x}s}$ and $K \ni s$ be such that $K
\Vdash \phi^r(\hat{x},\tau_{r\hat{x}s})$. By \ref{helpfullemma2},
part 5), $K \Vdash \phi^r(\hat{x},\tau_{r\hat{x}s}^s)$.

We claim that \begin {equation} \tau = \{ \langle \tau_{ir},
J_{ir} \rangle \mid i \in I, r \in J_i \cap J \} \cup \{ \langle
\tau_{r\hat{x}s}^s, r \rangle \mid r \in J, \langle \hat{x}, T
\rangle \in \sigma^r, s \in J_r \} \end {equation} suffices: $J
\Vdash \forall x \in \sigma \; \exists y \in \tau \; \phi(x,y)$.

Forcing a universal has two parts. The first is that for all
$\rho$, \begin {equation} J \Vdash \rho \in \sigma \rightarrow
\exists y \in \tau \; \phi(\rho,y). \end {equation} For the
second, it suffices to show that for all $r \in J$ and terms
$\rho$ \begin {equation} J_r \Vdash \rho \in \sigma^r \rightarrow
\exists y \in \tau^r \; \phi^r(\rho,y). \end {equation}

For the former, first suppose $J \supseteq K \Vdash \rho \in
\sigma$. It should be clear that the first part of $\tau$ covers
this case. For the other part of forcing that implication, for
each $r \in J$, it suffices to show that $J_r$ is as desired: for
all $K \subseteq J_r$, if $K \Vdash \rho^r \in \sigma^r$ then $K
\Vdash \exists y \in \tau^r \; \phi^r(\rho^r,y)$. This is subsumed
by the second implication from above, to which we now turn.

To show $J_r \Vdash \rho \in \sigma^r \rightarrow \exists y \in
\tau^r \; \phi^r(\rho,y)$, we need to show first that if $J_r
\supseteq K \Vdash \rho \in \sigma^r$ then $K \Vdash \exists y \in
\tau^r \; \phi^r(\rho,y)$, and second that for all $s \in J_r$
there is a $K \ni s$ such that if $K \supset L \Vdash \rho^s \in
\sigma^r$ then $L \Vdash \exists y \in \tau^r \;
\phi^r(\rho^s,y)$. By choosing $K$ to be $J_r$, the second is
subsumed by the first. For that, it should be clear that the
second part of $\tau$ covers this case. In a bit more detail, it
suffices to work locally. (That is, it suffices to find a
neighborhood of $s \in K$ forcing what we want, by lemma
\ref{helpfullemma2}.) Locally, $\rho$ is forced equal to some
$\hat{x}$, where $\langle \hat{x}, T \rangle \in \sigma^r$. As
already shown, some neighborhood of $s$ forces
$\phi^r(\hat{x},\tau_{r\hat{x}s}^s)$, and $\langle
\tau_{r\hat{x}s}^s, T \rangle \in \tau^r$ by the second part of
$\tau$.

\end{itemize}
\end{proof}

\section{Exponentiation, Separation, and Replacement}
The second model of \cite{LR} is the topological semantics of the
current paper applied to $\mathbb{R}$ (with the standard
topology). There it was shown that the model satisfies not just
the axioms proven here but also Exponentiation and Separation, and
hence, in the presence of Collection, Replacement too. The reason
those extra properties hold in that case is that $\mathbb{R}$ is a
``nice" space. It is the purpose of this section to explore just
what makes $\mathbb{R}$ nice and why such niceness is necessary
for these additional properties.

\subsection{Exponentiation}
We can identify exactly the property of $T$ that would make
Exponentiation hold.

\begin{theorem} $T \Vdash$ Exponentiation iff $T$ is locally
connected.
\end{theorem}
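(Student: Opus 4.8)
The plan is to prove the two implications separately, in each case by analysing directly what a term forced to be a function from $\sigma$ to $\tau$ can look like over an open set.

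\textbf{Locally connected $\Rightarrow$ Exponentiation.} Fix terms $\sigma,\tau$. Since ``$Z$ is a function from $\sigma$ to $\tau$'' is $\Delta_0$ in $Z,\sigma,\tau$, Bounded Separation (proven in the previous section) lets us pass from any $C$ forced to contain all such functions to the sub-collection consisting of exactly such functions; so it suffices to produce a single term $\chi$ with $T \Vdash \forall F\,(F\colon\sigma\to\tau \to F \in \chi)$. The core is a normal-form lemma: if $J$ is open and $J \Vdash ``F$ is a function from $\sigma$ to $\tau$'', then for every $r \in J$ there are a \emph{connected} open $U$ with $r\in U\subseteq J$ and a term $g$ lying in an explicitly described, set-sized family $\mathcal{G}(\sigma,\tau)$ with $U \Vdash F = g$. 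The members of $\mathcal G(\sigma,\tau)$ are the ``piecewise value-listing'' terms: for each $\langle\sigma_i,J_i\rangle\in\sigma$ one records a covering of $U\cap J_i$ by opens, and on each piece a designated element of $\tau$ (or of some relativization $\tau^s$) to serve as $F(\sigma_i)$ there, together with the relativized data demanded by the settled clauses; the key structural point is that every such $g$ is a set of pairs whose first coordinates are pairing terms built from elements of $\sigma$ and $\tau$ and their relativizations, so $\mathcal G(\sigma,\tau)$ has bounded V-rank and is a set. Granting the lemma, put $\chi := \{\,\langle g,U\rangle \mid U$ connected open, $g\in\mathcal G(\sigma,\tau)$, $U\Vdash ``g$ is a function from $\sigma$ to $\tau$''$\,\}$ together with the relativized versions needed for the $\forall$-clause, and verify $T\Vdash\forall F\,(F\colon\sigma\to\tau\to F\in\chi)$ straight from the forcing clauses: given $F$ and $J\Vdash F\colon\sigma\to\tau$ and $r\in J$, local connectedness yields a connected $U\ni r$ inside $J$, the lemma yields $g$ with $U\Vdash F=g$, and $\langle g,U\rangle\in\chi$; the settled clause is handled identically after replacing $\sigma,\tau,F$ by $\sigma^r,\tau^r,F^r$ and using that $(\cdot)^r$ does not raise V-rank.

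\textbf{Not locally connected $\Rightarrow$ Exponentiation fails.} Contrapositively: failure of local connectedness means some component of some open $W$ is not open, so we may fix $W$ open and $r_1\in W$ such that every open $U$ with $r_1\in U\subseteq W$ is disconnected (were such a $U$ connected it would be an open neighbourhood of $r_1$ inside the component of $r_1$ in $W$, against the choice of $r_1$). Iterating disconnectedness produces around $r_1$ a strictly decreasing sequence $W=U_0\supsetneq U_1\supsetneq\cdots$ of open neighbourhoods with $U_{n+1}$ clopen in $U_n$ — hence clopen in $W$ — and nonempty open shells $V_n:=U_n\setminus U_{n+1}$, again clopen in $W$. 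Using the shells one builds a term $F$ forced on $W$ to be a function from $\hat\omega$ to a suitable target $\tau$ (tailored to the shells) that ``does something new on each shell''; the design is such that deciding $F$ on any neighbourhood of $r_1$ requires naming the behaviour on cofinally many shells simultaneously. Then, given any proposed $C$, one chooses the behaviour of $F$ on the shells so as to differ, near $r_1$, from every member of $C$ and of $C^{r_1}$ — so $W\not\Vdash F\in C$, and $C$ cannot witness Exponentiation for $\hat\omega,\tau$.

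\textbf{Main obstacle.} I expect the crux to be the normal-form lemma of the first direction — and, dually, the ``escaping'' step of the second — since this is exactly where local connectedness is, or fails to be, consumed. Concretely, one must check that the piecewise value-listing term $g$ is forced equal to $F$ on the \emph{connected} open $U$ in \emph{both} clauses of forcing equality, including the settled clause $F^s=g^s$ for all $s\in U$, and that connectedness of $U$ (not mere openness) is what makes the listing faithful; getting the precise statement of $\mathcal G(\sigma,\tau)$ and of this lemma right is the heart of the matter. The remaining points — that $\mathcal G(\sigma,\tau)$ is closed under the relativizations $(\cdot)^r$, that Bounded Separation really does trim $\chi$ to the exponential, the topological bookkeeping producing the shells $V_n$, and the check that the counterexample $F$ is genuinely forced total and functional on $W$ — are routine and left to the write-up.
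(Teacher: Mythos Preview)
For the forward direction (locally connected $\Rightarrow$ Exponentiation), your plan is workable in spirit but more elaborate than necessary, and the role you assign to connectedness is misplaced. The paper's candidate for the exponential is simply $\{\langle \rho, J\rangle \mid J \Vdash ``\rho\colon\sigma\to\chi"\} \cup \{\langle \hat x, r\rangle \mid x\colon\check{\sigma^r}\to\check{\chi^r}\}$; set-sizedness of the first part is dispatched in one line (hereditarily restrict $\rho$ to $J$). Local connectedness enters \emph{only} in the settled clause of $\forall$: one must show that any $\rho$ forced to be a function between the ground-model sets $\sigma^r$ and $\chi^r$ is locally a ground-model function. On a connected neighborhood $K$, the truth values of ``$\rho(\sigma_i)=\chi_j$'' and ``$\rho(\sigma_i)\neq\chi_j$'' partition $K$ into disjoint opens, so one of them is all of $K$. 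Your normal-form lemma instead tries to canonicalize functions between \emph{arbitrary} $\sigma,\tau$, where the settled clause $F^s=g^s$ (literal set equality for every $s\in U$) is genuinely hard to arrange for a ``piecewise value-listing'' $g\neq F$; you correctly flag this as the main obstacle, but the paper sidesteps it by using $F$ itself in the unsettled part and reserving connectedness for the ground-model case.

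For the converse there is a real gap. Your decreasing clopen chain $W=U_0\supsetneq U_1\supsetneq\cdots$ around $r_1$ produces a shell-dependent $F$, and you want no neighborhood of $r_1$ to force $F$ equal to a ground-model function, so that $F$ escapes $C^{r_1}$. But this fails precisely when $\bigcap_n U_n$ is open: on that open set $F$ is a single ground-model function, which $C^{r_1}$ may well contain (it could be the full ground-model function space). Nothing in ``every open neighborhood of $r_1$ in $W$ is disconnected'' prevents $\bigcap_n U_n$ from being open --- it contains the (non-open) component of $r_1$, but may be strictly larger. The paper's argument is built to absorb exactly this: it \emph{assumes} Exponentiation, uses the shell function $f$ to deduce that $J_\omega:=\bigcap_n J_n$ is open (since $f$ must locally equal some ground-model function, and near $x$ that can only be the constant $1$, whose locus is $J_\omega$), observes that $J_\omega$ is then a clopen neighborhood of $x$ and hence again disconnected, and continues the splitting transfinitely through all ordinals, producing a proper class of distinct clopen subsets of $J$ --- the contradiction. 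Your one-step diagonalization against a fixed $C$ never sees this transfinite continuation, and without it the argument does not close.
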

\begin{proof}
First we do the right-to-left direction. So suppose $T$ is locally
connected. Given terms $\sigma$ and $\chi$, let $\tau$ be $\{
\langle \rho, J \rangle \mid J \Vdash \rho$ is a function from
$\sigma$ to $\chi \} \cup \{ \langle \hat{x}, r \rangle \mid x$ is
a function from $\check{\sigma^r}$ to $\check{\chi^r} \}$. ($\tau$
can be arranged to be set-sized by requiring that $\rho$ be
hereditarily empty outside of $J$.) It suffices to show that $T
\Vdash \forall z \; (z \in \tau \leftrightarrow z$ is a function
from $\sigma$ to $\chi)$.

The first clause in forcing $\forall$ is that, for any term
$\rho$, $T \Vdash \rho \in \tau \leftrightarrow \rho$ is a
function from $\sigma$ to $\chi$. That $J \Vdash \rho \in \tau$
iff $J \Vdash ``\rho$ is a function from $\sigma$ to $\chi"$ is
immediate from the first part of $\tau$. As for $J \Vdash \rho^r
\in \tau^r$ iff $J \Vdash ``\rho^r$ is a function from $\sigma^r$
to $\chi^r"$, by \ref{helpfullemma2}, part 6), both of those
statements are independent of $J$, and the iff holds because of
the second part of $\tau$.

The crux of the matter is the second clause in forcing $\forall$:
$J \Vdash \rho \in \tau^r$ iff $J \Vdash ``\rho$ is a function
from $\sigma^r$ to $\chi^r"$. Why can only ground model functions
be forced (locally) to be functions? For $s \in J$, let $K_s
\subseteq J$ be a connected neighborhood of $s$. For each $\langle
\sigma_i, T \rangle \in \sigma^r$, pick a $\langle \chi_i, T
\rangle \in \chi$ such that the value of (i.e. the largest subset
of $K_s$ forcing) $``\rho(\sigma_i) = \chi_i"$ is non-empty. That
set, along with the value of $``\rho(\sigma_i) \not = \chi_i"$, is
a disjoint open cover of $K_s$. Since $K_s$ is connected, the
latter set is empty. So all of the values of $\rho$ are determined
by $K_s$, so $K_s$ forces $\rho$ to equal a ground model term.
Since $J$ is covered by such sets, $J$ also forces $\rho$ to be a
ground model term.

Now for the converse, suppose $T$ is not locally connected.
Assuming that $T$ still forces Exponentiation, we will come up
with a contradiction. Let $x \in J \subseteq T$ be such that no
neighborhood of $x$ which is a subset of $J$ is connected. Working
within $J$ as a subspace of $T$, $J$ is itself not connected, and
so can be partitioned into two clopen subsets, $K_0$ and $J_0 \ni
x$. Inductively, given $x \in J_n$ clopen, partition $J_n$ into
clopen $K_{n+1}$ and $J_{n+1} \ni x$. Let $O$ consist of all of
the $J_i$s. By Exponentiation, $J$ is covered by sets $I$ forcing
$``Z_I$ is the function space from $\hat{O}$ to $\hat{2}$". In
particular, for each $r \in J$ there is an $I_r$ with $r \in I_r
\subseteq I$ such that, for each $\sigma$, $I_r \Vdash ``\sigma
\in Z_I^r \leftrightarrow \sigma$ is a function from $\hat{O}$ to
$\hat{2}$." Notice that each $Z_I^r$ consists only of ground model
terms standing for ground model functions, and each ground model
function from $O$ to $2$ is forced by $J$ to be such a function
from $\hat{O}$ to $\hat{2}$, so each $Z_I^r$ equals $\hat{O^2}$.
In short, for each $\sigma$, $J \Vdash \sigma \in \hat{O^2}
\leftrightarrow \sigma$ is a function from $\hat{O}$ to
$\hat{2}$."

Now let $f$ be the term such that $J_i \Vdash f(\hat{J}_i) = 1$
and $K_i \Vdash f(\hat{J}_i) = 0$." $J \Vdash ``f$ is a function
from $\hat{O}$ to $\hat{2}$," so $J \Vdash ``f \in \hat{O^2}".$
Each point in $J_\omega := \bigcap_i J_i$ has a neighborhood,
necessarily a subset of $J_\omega$, forcing $f$ to be the constant
function 1, so $J_\omega$ is open. As an intersection of closed
sets, $J_\omega$ is also closed, and contains $x$ to boot. This
construction can continue indefinitely: at stage $\alpha + 1$,
split the clopen $J_\alpha \ni x$ into clopen $K_{\alpha + 1}$ and
$J_{\alpha + 1} \ni x$; at stage $\lambda$ a limit, consider the
function space from $\{ J_\alpha \mid \alpha < \lambda \}$ to 2.
This is a contradiction, because it produces class-many subsets of
$J$.
\end{proof}

An example of a non-locally connected space is Cantor space.
Forcing with that produces a random 0-1 sequence, which is a
function from $\mathbb{N}$ to 2. So the canonical generic is in a
function space, but cannot be captured by any ground model set.

\subsection {Separation}
The situation here seems more difficult than for Exponentiation,
because we have not yet been able to find a property on $T$
equivalent to Separation. Indeed, it is questionable whether there
is any such nice property, as discussed at the end of this
sub-section. Nevertheless, we still have a theorem and some
examples.

It is instructive to see why, in the proof of Separation in the
main theorem, Full Separation did not hold, only Bounded. The
problem came with the settling. Given $\sigma$ and $\langle
\hat{x}, T \rangle \in \sigma^r$, we need to know whether to put
$\hat{x}$ into the subset $\tau$ of $\sigma$ defined by $\phi$. We
can certainly look for a neighborhood forcing $\phi^r(\hat{x})$ or
its negation. But when forcing a universal, we need a fixed
neighborhood $J_r$ of $r$ deciding each $\phi^r(\hat{x})$
simultaneously, and cannot afford to use a separate $J_{r
\hat{x}}$ for each different $\hat{x}$. Since all the parameters
in that formula are ground model terms, it is not their meanings
that could change over different open sets, but rather only the
topology itself and what it makes true and false. So the natural
hypothesis to say that this doesn't happen is that, locally, all
points look alike.

\begin{definition}
$T$ is locally homogeneous around $r, s \in T$ if there are
neighborhoods $J_r, J_s$ of $r$ and $s$ respectively and a
homeomorphism of $J_r$ to $J_s$ sending $r$ to $s$.

An open set $U$ is homogeneous if it is locally homogeneous around
all $r, s \in U$.

$T$ is locally homogeneous if every $r \in T$ has a homogeneous
neighborhood.
\end{definition}

\begin{lemma}
If $U$ is homogeneous, $\phi$ contains only ground model terms,
and $U \supseteq V \Vdash \phi \;(V$ non-empty$)$, then $U \Vdash
\phi$.
\end{lemma}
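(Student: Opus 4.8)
The plan is to transport the forcing relation along local homeomorphisms supplied by homogeneity. First, by Lemma~\ref{helpfullemma2}(4) it is enough to find, for every $s \in U$, a neighbourhood of $s$ lying inside $U$ which forces $\phi$. Fix such an $s$ and pick any $r \in V$ (possible as $V \neq \emptyset$). Homogeneity of $U$ gives neighbourhoods $J_r \ni r$, $J_s \ni s$ and a homeomorphism $g \colon J_r \to J_s$ with $g(r)=s$; after shrinking $J_s$ and then $J_r$ appropriately (restricting $g$ accordingly) we may assume $J_r \subseteq V$ and $J_s \subseteq U$, still with $r \in J_r$ and $s \in J_s$. By Lemma~\ref{helpfullemma2}(2), $J_r \Vdash \phi$. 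So it remains to transfer this across $g$ and conclude $J_s \Vdash \phi$.

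The core is a transfer principle for $\Vdash$. A homeomorphism $g \colon A \to B$ between open subsets of $T$ induces an operation $g^{*}$ on terms, defined by recursion: send each pair $\langle \sigma_i, K_i\rangle$ to $\langle g^{*}\sigma_i, g(K_i \cap A)\rangle$ and each pair $\langle \sigma_h, t_h\rangle$ with $t_h \in A$ to $\langle g^{*}\sigma_h, g(t_h)\rangle$ (discarding the point pairs with $t_h \notin A$). I would prove, by simultaneous induction on terms and on formula complexity, that for every open $W \subseteq A$ and every formula $\psi$, $W \Vdash \psi(\vec{\sigma})$ iff $g(W) \Vdash \psi(\vec{g^{*}\sigma})$. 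The inductive step runs clause by clause through the definition of $\Vdash$: $g$ is a lattice isomorphism from the opens inside $A$ onto those inside $B$, respecting $\subseteq$, $\cap$ and $\bigcup$ and carrying each point to its image, and an easy term induction shows $(g^{*}\sigma)^{g(r)} = \sigma^{r}$, so the ``second clauses'' for $\rightarrow$ and $\forall$ transfer exactly as the first. In the $\in$, $\vee$ and $\exists$ clauses a witnessing term on one side is matched by its $g^{*}$-image (or by its $(g^{-1})^{*}$-image going backwards) on the other. Two ingredients make the quantifier clauses go through: the locality of forcing in its term arguments --- forcing a formula over an open $W'$ depends only on the restrictions of its term parameters to $W'$ (intersect all open labels with $W'$, delete point labels outside $W'$), a routine induction --- and the fact that restriction to $g(W)$ carries the $g^{*}$-images exactly onto the terms supported in $g(W)$, so that quantification over all terms on the $B$-side is correctly reproduced. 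The same locality repairs the only mismatch: $g^{*}$ does not fix a ground model term $\hat{x}$, but $g^{*}\hat{x}$ is merely $\hat{x}$ with each label $T$ shrunk to $B$, hence has the same restriction to any open $W' \subseteq B$ as $\hat{x}$ does; therefore forcing $\phi(\vec{\hat{x}})$ and forcing $\phi(\vec{g^{*}\hat{x}})$ over such a $W'$ agree.

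Combining: from $J_r \Vdash \phi(\vec{\hat{x}})$ the transfer principle (with $A = J_r$, $B = J_s$, $W = J_r$, so $g(W) = J_s$) yields $J_s \Vdash \phi(\vec{g^{*}\hat{x}})$, and then locality gives $J_s \Vdash \phi(\vec{\hat{x}})$ since $J_s \subseteq B$. As $s \in U$ was arbitrary and every such $J_s$ lies in $U$, Lemma~\ref{helpfullemma2}(4) delivers $U \Vdash \phi$. I expect the transfer principle to be the main obstacle: giving a workable definition of $g^{*}$ on arbitrary (not just ground model) terms, checking that settling commutes with it, and pushing the induction through the two-clause definitions of $\rightarrow$ and $\forall$ while keeping the quantifiers ranging over the right class of terms --- all of which rest on the locality lemma. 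The reduction to a pointwise claim and the set-theoretic shrinking of $J_r$, $J_s$ are routine.
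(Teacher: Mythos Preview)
Your approach is essentially the paper's: pick $r\in V$, for each $s\in U$ use the homogeneity homeomorphism to transport forcing from a neighbourhood of $r$ to one of $s$, and conclude via Lemma~\ref{helpfullemma2}(3)/(4). The paper defines the same $g^{*}$ (called $f$) on terms, states the same transfer claim by induction on formulas, and then covers $U$.

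Where you differ is in care, not method. The paper asserts flatly that $f(\phi)=\phi$ when $\phi$ has only ground model parameters; you correctly observe that $g^{*}\hat{x}$ is not literally $\hat{x}$ (the label $T$ shrinks to $B$) and repair this with a locality lemma showing that forcing over $W'\subseteq B$ sees no difference. You also make explicit the shrinking of $J_r$ into $V$ and $J_s$ into $U$, which the paper leaves implicit (it never says why $V_r\Vdash\phi$ follows from $V\Vdash\phi$). Both of these are genuine refinements of the paper's sketch rather than a new route. Your identification of the settling compatibility $(g^{*}\sigma)^{g(r)}=\sigma^{r}$ as the key to pushing the induction through the second clauses of $\rightarrow$ and $\forall$ is exactly right and is the point the paper's one-line ``easy to show inductively'' hides. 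The locality lemma you invoke is indeed needed to handle the unrestricted term quantifiers in the $\forall$ clause and deserves its own induction, as you note; that is the only place where real work remains, and it goes through.
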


\begin{proof}
Let $r \in V$. For $s \in U$, let $V_r$ and $V_s$ be the
neighborhoods $f$ the homeomorphism given by the homogeneity of
$U$. $f(\sigma)$ can be defined inductively on terms $\sigma$.
(Briefly, hereditarily restrict $\sigma$ to $V_r$ and apply $f$ to
the second parts of the pairs in the terms.) $f(\psi)$ is then
$\psi$ with $f$ applied to the parameters. It is easy to show
inductively on formulas that $V_r \Vdash \psi$ iff $V_s \Vdash
f(\psi)$.

If $\phi$ contains only ground model terms, then $f(\phi) = \phi$.
So $U$ is covered by open sets that force $\phi$. Hence $U \Vdash
\phi$.
\end{proof}

\begin{theorem} \label {homog-sep}
If $T$ is locally homogeneous then $T \Vdash Full Separation$.
\end{theorem}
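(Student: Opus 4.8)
The plan is to mimic the proof of Bounded Separation from the main theorem, but to exploit local homogeneity to get past exactly the point where that proof was forced to restrict to $\Delta_0$ formulas. Recall that there, given a term $\sigma$ and a formula $\phi$ (now arbitrary), we built $\tau = \{ \langle \sigma_i, J \cap J_i \rangle \mid \langle \sigma_i, J_i \rangle \in \sigma$ and $J \Vdash \phi(\sigma_i) \} \cup \{ \langle \hat{x}, r \rangle \mid \langle \hat{x}, T \rangle \in \sigma^r$ and $T \Vdash \phi^r(\hat{x}) \}$. The first clauses of forcing $\forall z\, (z \in \tau \leftrightarrow z \in \sigma \wedge \phi(z))$ go through as before, since forcing $\rho \in \tau$ and forcing $\rho \in \sigma \wedge \phi(\rho)$ match up by construction, and likewise after settling. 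The obstruction was the \emph{second} clause of forcing $\forall$: given $r \in T$ we need a single neighborhood $J \ni r$ such that for every $\rho$, $J \Vdash \rho \in \tau^r \leftrightarrow \rho \in \sigma^r \wedge \phi^r(\rho)$, and since the only terms forceable into $\tau^r$ or $\sigma^r$ are (locally) ground model terms $\hat{x}$, this amounts to choosing $J$ so that $J$ decides $\phi^r(\hat{x})$ uniformly in $\hat{x}$ in a way that agrees with membership in $\tau^r$.

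The key step is this: let $J_r$ be a homogeneous neighborhood of $r$, as provided by local homogeneity. I claim $J_r$ works. The point of the homogeneity lemma just proven is that for a formula $\psi$ with only ground model parameters, if any non-empty open subset $V$ of $J_r$ forces $\psi$, then all of $J_r$ forces $\psi$. Now $\phi^r(\hat{x})$ has only ground model parameters (both $\sigma^r$-components and $\hat{x}$ are ground model terms), so for each such $\hat{x}$, either $J_r \Vdash \phi^r(\hat{x})$ (if some sub-neighborhood forces it) or no non-empty subset of $J_r$ forces it, and in the latter case $J_r \Vdash \neg \phi^r(\hat{x})$. So $J_r$ decides every $\phi^r(\hat{x})$, and does so consistently with whether $\langle \hat x, \cdot\rangle$ was put into $\tau^r$. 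Concretely: $\langle \hat{x}, T \rangle \in \tau^r$ iff $\langle \hat{x}, T\rangle \in \sigma^r$ and some neighborhood of some point forced $\phi^r(\hat x)$ — but I should first check, using the homogeneity lemma together with part 6 of Lemma~\ref{helpfullemma2}, that ``$T \Vdash \phi^r(\hat x)$'' in the definition of $\tau$ can be replaced by ``$J_r \Vdash \phi^r(\hat x)$'' without changing which $\hat x$ get in, at least after settling and restricting to $J_r$; this is where one has to be a little careful, since $\tau$ was defined with $T$, not $J_r$, but after applying ${}^r$ and the homogeneity lemma the two agree on $J_r$. Granting that, forcing $\rho \in \tau^r$ on a subset of $J_r$ is equivalent to $\rho$ being locally some $\hat x \in \sigma^r$ with $J_r \Vdash \phi^r(\hat x)$, which by the above is equivalent to $\rho \in \sigma^r \wedge \phi^r(\rho)$ being forced there.

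I expect the main obstacle to be bookkeeping rather than a genuine conceptual gap: specifically, making sure that the term $\tau$ built globally (using $T \Vdash \phi$) really does behave, after settling at $r$ and restricting to the homogeneous $J_r$, like the term one would have built locally using $J_r \Vdash \phi$, so that the second $\forall$-clause is genuinely satisfied and not just satisfied by a \emph{different} witness. One clean way to handle this is to redefine $\tau$ from the start so that its ``settled'' part reads $\{ \langle \hat{x}, r \rangle \mid \langle \hat{x}, T \rangle \in \sigma^r$ and $U_r \Vdash \phi^r(\hat{x}) \}$ where $U_r$ is a fixed chosen homogeneous neighborhood of $r$ — the homogeneity lemma guarantees this does not depend on which homogeneous neighborhood is picked, and it makes the second $\forall$-clause hold by design while leaving the first-clause verification essentially unchanged. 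A secondary point to verify is that $\phi^r$ may itself have been obtained by settling a formula whose parameters include the running term variable, so one should confirm $(\phi^r)^s = \phi^r$ is used correctly where the nested settling clauses of $\Vdash \forall$ and $\Vdash \rightarrow$ demand it; but this is exactly as in the Bounded Separation proof and carries over verbatim.
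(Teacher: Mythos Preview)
Your proposal is correct and converges on exactly the paper's approach: redefine the settled part of $\tau$ using a fixed homogeneous neighborhood $K_r$ of $r$ in place of $T$, and then use the homogeneity lemma to see that any non-empty $J\subseteq K_r$ forces $\phi^r(\hat x)$ iff $K_r$ does. One small tightening: the second clause of $\Vdash\!\rightarrow$ inside the \emph{first} $\forall$-clause also needs the homogeneity argument (not only the second $\forall$-clause you single out), in particular to handle $\rho^r$ entering $\tau^r$ via the first half of $\tau$'s definition; the paper dispatches this with Lemma~\ref{helpfullemma2} part 5 followed by the homogeneity lemma, so your ``go through as before'' is slightly too quick at that spot.
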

\begin{proof}
As in the proof of Bounded Separation from the previous section,
we have to show that, for any $\sigma$, $T \Vdash \exists Y \;
\forall Z \; (Z \in Y \leftrightarrow Z \in \sigma \wedge
\phi(Z))$, only this time with no restriction on $\phi$. The
choice of witness $Y$ is slightly different. For each $r$ let $K_r
\ni r$ be homogeneous. Let $\tau$ be $\{ \langle \sigma_i, J \cap
J_i \rangle \mid \langle \sigma_i, J_i \rangle \in \sigma$ and $J
\Vdash \phi(\sigma_i) \} \cup \{ \langle \hat{x}, r \rangle \mid
\langle \hat{x}, T \rangle \in \sigma^r$ and $K_r \Vdash
\phi^r(\hat{x}) \}$. The difference from before is that in the
latter part of $\tau$ membership is determined by what's forced by
$K_r$ instead of by $T$. We claim that $\tau$ suffices: $T \Vdash
\forall Z \; (Z \in \tau \leftrightarrow Z \in \sigma \wedge
\phi(Z))$.

For the first clause in forcing $\forall$, let $\rho$ be a term.
We need to show $T \Vdash \rho \in \tau \leftrightarrow \rho \in
\sigma \wedge \phi(\rho)$. By the first clause in forcing
$\rightarrow$, we have to show that for all $J \; J \Vdash \rho
\in \tau$ iff $J \Vdash \rho \in \sigma \wedge \phi(\rho)$, which
should be clear from the first part of $\tau$. For the second
clause in $\rightarrow$ it suffices to show that for all $J
\subseteq K_r \; J \Vdash \rho^r \in \tau^r$ iff $J \Vdash \rho^r
\in \sigma^r \wedge \phi^r(\rho^r)$. Regarding forcing membership,
all of the terms here are ground model terms, so membership is
absolute (does not depends on the choice of $J$). If $\rho^r$
enters $\tau^r$ because of the first part of $\tau$'s definition,
then we have $\sigma_i^r = \rho^r, \; r \in J \Vdash
\phi(\sigma_i), \; r \in J_i$, and $\langle \sigma_i, J_i \rangle
\in \sigma$. By \ref{helpfullemma2}, part 5), some neighborhood
$J_r$ of $r$ forces $\phi^r(\sigma_i^r)$. By the lemma just above
(applied to $K_r \cap J_r$), $K_r$ forces the same. Hence we can
restrict our attention to terms $\rho^r$ which enter $\tau$
because of $\tau$'s definition's second part. Again by the
preceding lemma, for $J$ non-empty, $J \Vdash \phi^r(\rho^r)$ iff
$K_r \Vdash \phi^r(\rho^r)$, which suffices. (For $J$ empty, $J$
forces everything.)

For the second clause in forcing $\forall$, it suffices to show
that $K_r \Vdash \rho \in \tau^r \leftrightarrow \rho \in \sigma^r
\wedge \phi^r(\rho)$. If any $J \subseteq K_r$ forces $\rho \in
\tau^r$ or $\rho \in \sigma^r$, then locally $\rho$ is forced to
be some ground model term, and we're in the same situation as in
the previous paragraph.
\end{proof}

We would like to see to what degree we can turn the previous
theorem into an iff. Toward that end, suppose $T$ is not locally
homogeneous. So we can choose $r \in T$ which has no homogeneous
neighborhood. That means every neighborhood of $r$ contains two
points, say $s$ and $t$, with no local homeomorphism sending $s$
to $t$. If there were local homeomorphisms from $r$ to both $s$
and $t$, they could be composed to get one from $s$ to $t$. So one
of $s$ and $t$ can be chosen to be $r$.

Example 1: It is possible that there is a fixed $s$ that can be
chosen as a witness to $r$'s non-homogeneity from every
neighborhood of $r$. In particular, every open containing $r$ also
contains $s$. If every open set containing $s$ also contained $r$,
then the function interchanging $r$ and $s$ and leaving everything
else alone would be a homeomorphism contradicting the assumptions,
so some neighborhood of $s$ does not contain $r$. The smallest
example of such a space is the two-element space $T = \{r, s\}$,
with opens $T, \emptyset,$ and $\{s\}$. Let $\sigma$ be $\{
\langle 0, r \rangle\}$; that is, 0 gets into $\sigma$ when we
settle at $r$. Let $\phi$ be $\forall x, y \; x \not = y \vee \neg
x \not = y.$ Suppose $Z$ were $\{ x \in \sigma \mid \phi \}$:
\begin {equation} T \Vdash ``\forall x \; x \in Z \leftrightarrow x \in
\sigma \wedge \phi". \end {equation} In particular, settling at $r$, we
get \begin {equation} T \Vdash ``\forall x \; x \in Z^r
\leftrightarrow x = 0 \wedge \phi^r", \end {equation} or, more
simply, \begin {equation} T \Vdash 0 \in Z^r \leftrightarrow \phi.
\end {equation} As $Z^r$ is a ground model term, $T$ decides $0 \in Z^r$.
But $T$ does not decide $\phi$: $T$ does not force ``$\sigma \not = 1
\vee \neg \sigma \not = 1"$, but $\{s\}$ forces $\phi$, both of
which can be calculated by cranking through the definitions. Hence
$T$ does not force Separation.

Example 2: An example of the opposite kind is where there is no
$s$ in every open neighborhood of $r$. Here let $T$ be
$\mathbb{R}^{\geq 0}$. Let $\sigma$ be $\{\langle 0,0 \rangle\}$.
Let $\phi$ be \begin {equation} \forall x \subseteq 1 \; \exists y
\; ((0 \in x \rightarrow y = 0 \vee y = 1) \wedge (\neg \neg y = 0
\vee \neg \neg y = 1 \rightarrow 0 \in x)). \end {equation} If
Separation held, we could form $\{ x \in \sigma \mid \phi\}$.
Settling at 0, we would have $\{ 0 \mid \phi\}$ as a ground model
set. But $\phi$ is not decided in any neighborhood of 0. That's
because $\mathbb{R}^{>0} \Vdash \phi$, since $y$ can be chosen to
alternate between 0 and 1 on the disjoint intervals that
constitute the support of $x$. But on any neighborhood containing
0, instantiating $x$ with $\{\langle 0, \mathbb{R}^{>0} \rangle\}$
forces any such $y$ to be the constant 0 or 1 on the positives,
hence not not equal to 0 or not not equal to 1, but does not force
$x$ to be inhabited.

What these examples indicate is less that the failure of
homogeneity leads to the failure of Separation, but rather that
what is needed in such a construction is the transferability of a
property of the underlying topology into the internal language of
the set theory. More explicitly, all the proof of Separation
needed was a $K_r \ni r$, which may well depend on the choice of
$\phi$ and $\sigma$, such that, for all $\langle \hat{x}, T
\rangle \in \sigma^r, \; K_r \Vdash \phi^r(\hat{x})$ or $K_r
\Vdash \neg \phi^r(\hat{x})$. It is easy to see that the existence
of $r, \phi,$ and $\sigma$ for which there is no such $K_r$
immediately provides a counter-example to Separation. This is
apparently less than the homeomorphisms needed for local
homogeneity. We find it unlikely that there would be a direct
correspondence between any natural topological property and this
feature of the forcing, so closely tied to set theory and its
language. In contrast, there could well be such a topological
property for a certain formula or class of formulas. It would be
interesting to know to what degree this is possible, and why, even
when the answer is simply ``not at all."

\subsection {Replacement and Strong Collection}
If Separation were to hold (in the presence of the other axioms
from above), then Strong Collection would follow, which itself
implies Replacement. Hence a powerful way to show that Separation
is not forced is to give an example in which even Replacement
fails. In the example below, the offending formula is a Boolean
combination of $\Sigma_1$ formulas. This is about as strong a
result as one could hope for, as further restrictions on the
formula render Replacement provable. Suppose, for instance, the
function were $\Sigma_1$ definable: $\forall x \in A \; \exists !
y \; \exists z \; \phi(x,y,z)$, where $\phi$ is $\Delta_0$. By
Collection, there is a bounding sets for triples $\langle x,y,z
\rangle$ with $\phi(x,y,z)$, as $x$ runs through $A$. By
$\Delta_0$ Separation, we can restrict that bounding set to only
triples where the first component is in $A$ and the triple
satisfies $\phi$. Then by $\Delta_0$ Separation again, we can
project onto the second coordinate. Perhaps there is still some
wiggle room between $\Sigma_1$ and Boolean combinations thereof,
such as in the number and use of implications (including
negations), if one wanted to fine tune this result, but there's
certainly not a lot.

Example 3: Let $T_n$ ($n>0$) be the standard space for collapsing
$\aleph_n$ to be countable: elements are injections from
$\aleph_0$ to $\aleph_n$, an open set is given by a finite partial
function of the same type, an element is in that open set if it is
compatible with the partial function. Let $T$ be the disjoint
union of the $T_n$s adjoined with an extra element $\infty$:
$\biguplus_n T_n \cup \{\infty \}$. A basis for the topology is
given by all the open subsets of each $T_n$, plus the basic open
neighborhoods of $\infty$, which are all of the form $\biguplus_{n
\geq N} T_n \cup \{\infty \}$ for some $N$.

This $T$ falsifies Replacement. To state the instance claimed to
be falsified, we need several parameters. One is $\{ \langle
\hat{n}, \infty \rangle \mid n \in \omega \}$, which we will call
$\omega^-$. Another is the internalization of the function $n
\mapsto \aleph_n \; (n \in \omega)$, which we will refer to via
the free use of the notation $\hat{\aleph}_n$, even when $n$ is
just a variable. Finally, we will implicitly need $\hat{\omega}$
in the assertion ``$X$ is countable," which is the abbreviation
for what you think (i.e. the existence of a bijection with
$\hat{\omega}$). Note that ``$X$ is uncountable" is taken as the
negation of ``$X$ is countable."

\begin {definition} Let $\phi(x,z)$ be the conjunction of:
\begin {enumerate}
\item $z=0 \vee z=1$
\item $z = 0 \leftrightarrow \hat{\aleph}_x$ is uncountable
\item $z = 1 \leftrightarrow \neg\neg\hat{\aleph}_x$ is countable.
\end {enumerate}
\end {definition}

\begin{proposition}
$T \not \Vdash \forall x \in \omega^- \; \exists ! y \; \phi(x,y)
\; \rightarrow \exists f \; \forall x \in \omega^- \phi(x, f(x))$.
\end{proposition}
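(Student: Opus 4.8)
The plan is to show that the hypothesis of the Replacement instance is forced everywhere but the conclusion is not, by exploiting the fact that settling collapses the whole topological picture to a single $T_n$ (or to a discrete ground-model copy of $\omega$ at $\infty$), and that the formula $\phi$ is crafted to be decided differently depending on which $T_n$ one lands in. First I would analyze when $T \Vdash \phi(\hat{n},\sigma)$ holds. Since $\phi(\hat{n},z)$ forces $z=0 \vee z=1$, and since $z=0 \leftrightarrow$``$\hat{\aleph}_n$ is uncountable'' together with $z=1 \leftrightarrow \neg\neg$``$\hat{\aleph}_n$ is countable,'' the value of $z$ is pinned down by whether $\hat{\aleph}_n$ is (forced to be) countable. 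The key computation is: on the subspace $T_m$, the canonical generic is a surjection $\omega \to \aleph_m$, so $T_m \Vdash$``$\hat{\aleph}_n$ is countable'' for every $n \le m$, hence $T_m \Vdash \neg\neg$``$\hat{\aleph}_n$ is countable'' for all $n \le m$; but for $n > m$ no open subset of $T_m$ adds a surjection $\omega \to \aleph_n$ (the forcing is $\aleph_m^+$-cc in the relevant sense, or more simply, $\aleph_n$ stays uncountable), and in fact $T_m \Vdash$``$\hat{\aleph}_n$ is uncountable'' so $z$ is forced to be $0$ there. So on $T_m$, $\phi(\hat{n},z)$ determines $z=1$ for $n\le m$ and $z=0$ for $n>m$.

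Next I would verify the antecedent: $T \Vdash \forall x \in \omega^- \; \exists ! y \; \phi(x,y)$. For the first clause of forcing $\forall$, given any term $\sigma$ forced (on some $J$) to be a member of $\omega^-$, locally $\sigma$ is forced equal to some $\hat{n}$, and then one uses \ref{helpfullemma2} together with the computation above, working inside whatever basic open one is in (either an open subset of some $T_m$, where the value of $y$ is determined as just described, or a neighborhood $\biguplus_{m\ge N}T_m\cup\{\infty\}$ of $\infty$, where one covers by the $T_m$'s for $m \ge N$ and on each piece a witness $y$ exists and is unique — note the pieces disagree about the value, but uniqueness is only required locally on each open set forcing $\phi$, and $\exists y$ is forced pointwise by passing to the $T_m$ containing the point). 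The settled versions $\phi^r$ are handled the same way: $\sigma^r$ is a ground model term, $r$ lies in some $T_m$ (or is $\infty$), and the same analysis applies with $T$ in place of open subsets. So the antecedent holds.

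Finally I would show $T \not\Vdash \exists f \; \forall x \in \omega^- \; \phi(x,f(x))$. Suppose for contradiction some neighborhood of $\infty$ forces this; shrinking, suppose $J = \biguplus_{m \ge N}T_m \cup\{\infty\}$ and a term $f$ with $J \Vdash \forall x \in \omega^- \; \phi(x,f(x))$. Settle at $\infty$: $J^{\infty} = \omega^-{}^{\infty}$ is just $\hat\omega$ (every $\hat n$ with label $\infty \in$ any neighborhood of $\infty$), and $f^\infty$ is a ground model term, so by \ref{helpfullemma2} part 5, some neighborhood of $\infty$ — hence some $\biguplus_{m \ge N'}T_m\cup\{\infty\}$ — forces $\forall x \in \hat\omega \; \phi^\infty(\hat x, f^\infty(\hat x))$, where now all parameters are ground model terms. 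But $f^\infty$ is a fixed ground model function; for it to satisfy $\phi$ at every $n$ it would have to take value $1$ at $n$ when ``$\hat\aleph_n$ countable'' is forced and value $0$ when ``$\hat\aleph_n$ uncountable'' is forced — and these differ between $T_m$ and $T_{m'}$ for $m \neq m'$ (at $n$ with $m < n \le m'$, $T_{m'}$ forces the value $1$ but $T_m$ forces value $0$). So no single ground model function works on $\biguplus_{m\ge N'}T_m$, since that space contains points from infinitely many distinct $T_m$. A parallel (easier) argument handles the case where the purported witness is forced on a neighborhood inside a single $T_n$: then settle at a point and observe $f^r$ must simultaneously witness $\phi^r$ for all $\hat n$, which is fine on one $T_n$ — so actually one must be careful that the \emph{failure} is genuinely at $\infty$; the instance is falsified because no open set at all forces the conclusion, and it suffices to note that $T$ itself doesn't, which by \ref{helpfullemma2} part 4 reduces to finding, for the point $\infty$, no neighborhood forcing it, which is exactly the computation just given.

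The main obstacle I anticipate is the bookkeeping around the two settling behaviors near $\infty$ versus inside a $T_n$, and in particular making precise the claim that an open subset of $T_m$ forces ``$\hat\aleph_n$ is uncountable'' for $n>m$ (as opposed to merely not forcing ``countable''); this requires checking that the collapse forcing $T_m$ does not collapse $\aleph_n$ for $n > m$, which is standard but must be stated in the topological-forcing idiom of this paper, and that ``uncountable'' being defined as the negation of ``countable'' interacts correctly with the double negations in clauses 2 and 3 of $\phi$. The interplay of $\neg\neg$ in clause 3 with the fact that on $T_m$ ($m\ge n$) the statement ``$\hat\aleph_n$ countable'' is outright forced (so a fortiori its double negation) is what makes the antecedent's uniqueness clause go through while still leaving room for the disagreement across the $T_m$'s that kills Replacement.
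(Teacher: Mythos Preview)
Your approach is essentially the paper's: show the antecedent is forced everywhere, then show the consequent fails at $\infty$ by settling there, observing that $f^\infty$ is a ground model term, and deriving a contradiction from the disagreement among the $T_m$'s about the countability of $\hat\aleph_n$. The endgame is the same; the paper simply makes it concrete by choosing a single $M=N+1$ and noting that $T_N\Vdash\hat\aleph_M$ uncountable while $\biguplus_{n>N}T_n\Vdash\hat\aleph_M$ countable, so $K$ cannot force $f^\infty(\hat M)$ to be either $\hat 0$ or $\hat 1$.

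Where your write-up goes astray is in the antecedent. You treat the case ``$\sigma$ forced on some $J$ to be a member of $\omega^-$'' as substantive and then argue about covering $J$ by pieces of various $T_m$'s. But $\omega^-=\{\langle\hat n,\infty\rangle\mid n\in\omega\}$ contains \emph{no} pairs whose second coordinate is an open set --- only the single point $\infty$. By the definition of $\Vdash\in$ (which requires a pair $\langle\tau_i,J_i\rangle$ with $J_i$ open), no nonempty $J$ ever forces $\sigma\in\omega^-$, so the first clause of the implication is vacuous. Likewise, for $r\neq\infty$ the term $(\omega^-)^r$ is empty, so those settling cases are vacuous too. The only live case is settling at $r=\infty$, where $(\omega^-)^\infty=\hat\omega$, and there your witness (the term equal to $0$ on $\biguplus_{0<i<n}T_i$ and $1$ elsewhere) is exactly what the paper uses. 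So your argument for the antecedent reaches the right conclusion, but via a misreading of the term structure; recognizing the vacuity collapses the verification to a single case.

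Your flagged obstacle --- that $T_m$ genuinely forces $\hat\aleph_n$ \emph{uncountable} for $n>m$, not merely fails to force it countable --- is real and is exactly the point one must check; the paper asserts it without further comment, and your instinct that it follows from the chain-condition behaviour of the collapse is the right one.
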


\begin{proof}
First we show that $T$ forces the antecedent $\forall x (x \in
\omega^- \rightarrow \exists ! y \; \phi(x,y))$.

For the first clause in forcing $\forall$, we need to show that
for all $\sigma$ $T \Vdash \sigma \in \omega^- \rightarrow \exists
! y \; \phi(\sigma,y)$. The first clause in forcing that
implication is vacuous, as no open set will force $\sigma \in
\omega^-$. The second clause is vacuous for all choices of $r$
except $\infty$, as then $(\omega^-)^r$ is empty. Finally, for $r
= \infty$, it suffices to show that $T \Vdash \exists ! y \; [(y =
0 \vee y = 1) \wedge (y = 0 \leftrightarrow
\hat{\aleph}_{\hat{n}}$ is uncountable$) \wedge (y = 1
\leftrightarrow \neg\neg\hat{\aleph}_{\hat{n}}$ is countable$)]$.
The term which is $0$ on $\biguplus_{0<i<n} T_n$ and $1$ on the
rest of $T$ suffices.

The second clause in forcing $\forall$ is similar.

Since $T$ forces the antecedent of the conditional, it suffices to
show that $T$ does not force the consequent: $T \not \Vdash
\exists f \; \forall x \in \omega^- \; \phi(x, f(x))$. If that
were not the case, there would be a term (we will ambiguously
refer to as $f$) and a neighborhood $J$ of $\infty$ such that $J
\Vdash \forall x \in \omega^- \; \phi(x, f(x))$. By lemma
\ref{helpfullemma2}, part 5), there would be a $K \ni \infty$ such
that $K \Vdash \forall x \in \hat{\omega} \; \phi(x,
f^\infty(x))$. K, being open, contains a set of the form
$\biguplus_{n \geq N} T_n$. Let $M$ be $N+1$. So $K \Vdash
\phi(\hat{M}, (f^{\infty}(\hat{M}))$. But $f^{\infty}(\hat{M})$ is
a ground model term, and so is (forced by $K$ to be) equal to
$\hat{0}$ or $\hat{1}$. Hence either $K \Vdash
\hat{\aleph}_{\hat{M}}$ is uncountable or $K \Vdash \neg \neg
\hat{\aleph}_{\hat{M}}$ is countable. But neither is the case,
since $K \supseteq T_N \Vdash \hat{\aleph}_{\hat{M}}$ is
uncountable and $K \supseteq \biguplus_{n>N}T_n \Vdash
\hat{\aleph}_{\hat{M}}$ is countable.
\end {proof}

Since the preceding is an example where Separation fails, by the
results of the previous sub-section, local homogeneity must fail
too. Arguably, though, that's not the essence of the construction.
In this case, what determined the choice of $y$ was a different
open set for each $x$. No given neighborhood of $\infty$ sufficed,
because it would have been split for some $x$ into a
sub-neighborhood forcing 0 and another forcing 1. So it seems to
be a matter of connectedness.

\begin {theorem} If T is locally connected then T $\Vdash$
Replacement.
\end {theorem}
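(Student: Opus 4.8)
The plan is to reduce Replacement to Strong Collection. Since we already have (regular) Collection from the main theorem, and since Strong Collection plus Bounded Separation gives Replacement in the usual way, it suffices to show that local connectedness upgrades Collection to Strong Collection. So the real target is: if $T$ is locally connected, then $T \Vdash$ Strong Collection. Concretely, given a term $\sigma$ and a formula $\phi$, and an open $J$ with $J \Vdash \forall x \in \sigma \; \exists y \; \phi(x,y)$, I must produce a term $\tau$ with $J \Vdash (\forall x \in \sigma \; \exists y \in \tau \; \phi(x,y)) \wedge (\forall y \in \tau \; \exists x \in \sigma \; \phi(x,y))$. The first conjunct is exactly what the Collection proof in the main theorem already delivers; the new content is the second conjunct — the strongness — which is where the proof of Collection was flagged as breaking down.

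First I would recall precisely where the Collection argument "wastes" elements. The bounding term $\tau$ built there has two parts: a "variable" part $\{\langle \tau_{ir}, J_{ir}\rangle \mid \dots\}$, where each $\tau_{ir}$ genuinely witnesses $\phi(\sigma_i, \tau_{ir})$ on $J_{ir}$, and a "settled" part $\{\langle \tau^s_{r\hat{x}s}, r\rangle \mid \dots\}$ coming from the relativizations $\sigma^r$. The variable part causes no trouble: each $\tau_{ir}$ is related, on its open set $J_{ir}$, to the element $\sigma_i$ of $\sigma$. The problem is the settled part. When we settle at $r$, the term $\sigma^r$ is a ground model term $\widehat{\check{\sigma^r}}$, and for each $\langle \hat{x}, T\rangle \in \sigma^r$ we threw in witnesses $\tau^s_{r\hat x s}$ obtained by settling again at points $s$ of a neighborhood $J_r$ of $r$; but the open-set label attached to such a witness in $\tau$ is the point $r$ itself (it only "appears" when we settle at $r$), so after settling at $r$ the witness $\tau^s_{r\hat x s}$ sits in $\tau^r$ — yet it is related to $\sigma^s$, not to $\sigma^r$, and $\sigma^s$ need not be a "piece" of $\sigma^r$ in any way that lets us find the required $x \in \sigma^r$ with $\phi^r(x, \tau^s_{r\hat x s})$. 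Local connectedness is exactly what rescues this: if we first shrink $J_r$ to a connected neighborhood $C_r$ of $r$, then for each ground model $\hat x$ with $\langle \hat x, T\rangle \in \sigma^r$, the largest open subset of $C_r$ forcing "$\sigma^r$ has $\hat x$ as a member" is all of $C_r$ (since membership in a ground model term is absolute by Lemma \ref{helpfullemma2}, part 6), so $\sigma^s = \sigma^r$ throughout $C_r$; more importantly, the witness-picking can be done on the connected set $C_r$ without being forced to split). So the settled part of $\tau$ should be rebuilt using a single connected $C_r$ in place of $J_r$, so that every witness contributed by settling at $r$ is attached to an actual member of $\sigma^r$.

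The key steps, in order: (1) state that Replacement follows from Strong Collection together with the already-proven axioms (Bounded Separation and Collection), so it suffices to force Strong Collection; (2) unwind Strong Collection to the local statement about a term $\sigma$, formula $\phi$, and open $J$ as above, noting as in the Collection proof that the relativized clause is a special case of the unrelativized one; (3) for each $r \in J$ choose, using local connectedness, a connected neighborhood $C_r \subseteq J_r$ of $r$, where $J_r$ is the neighborhood produced by the hypothesis $J \Vdash \forall x \in \sigma \; \exists y \; \phi(x,y)$ for the settled instance; (4) define $\tau$ as the union of the variable part $\{\langle \tau_{ir}, J_{ir}\rangle\}$ exactly as before and a revised settled part in which, for $r \in J$ and $\langle \hat x, T\rangle \in \sigma^r$, we pick on $C_r$ a single witness term $\vartheta_{r\hat x}$ (covering $C_r$ by neighborhoods forcing $\phi^r(\hat x, \cdot)$ and gluing via Lemma \ref{helpfullemma2}, part 3) and attach it with label $r$; (5) verify the first conjunct $\forall x \in \sigma \; \exists y \in \tau \; \phi(x,y)$ — this is the Collection argument verbatim, since $C_r \subseteq J_r$; (6) verify the second conjunct $\forall y \in \tau \; \exists x \in \sigma \; \phi(x,y)$: for the variable part $\tau_{ir}$ the witness is $\sigma_i$ on $J_{ir}$; for the settled part, after settling at $r$ the members of $\tau^r$ are (locally) among the $\vartheta_{r\hat x}^{\,s}$ for $s \in C_r$, and since membership in $\sigma^r$ is absolute and $\hat x$ with $\langle \hat x, T\rangle \in \sigma^r$ was the thing we chose the witness against, $\hat x$ itself serves as the required member of $\sigma^r$; (7) handle the relativized second clause of forcing $\forall$, which as usual is subsumed; conclude $J \Vdash$ Strong Collection, hence $T \Vdash$ Replacement.

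The main obstacle I expect is step (6) for the settled part — making sure that the witness $\vartheta_{r\hat x}$ we picked over the connected neighborhood $C_r$ really is tied to a genuine member of $\sigma^r$ after settling, and that this survives a further settling at some $s \in C_r$. The point of connectedness here is subtle: it is not that each individual instance of "$\phi^r(\hat x, \vartheta)$ vs. $\neg\phi^r(\hat x, \vartheta)$" forces $C_r$ to split (that is the Exponentiation-style use of connectedness); rather, it is that once we have, for each of the finitely-or-set-many relevant $\hat x$, committed to a witness on $C_r$, nothing downstream forces us off it, so the single label $r$ on $\vartheta_{r\hat x}$ is honest. I would need to check carefully that the interaction of the two parts of $\tau$ under a double settling $(\tau^r)^s = \tau^r$ does not reintroduce stray witnesses — but since $(\tau_{ir})$ carries open-set labels $J_{ir}$ rather than point labels, a settling at $r$ already discards those unless $r \in J_{ir}$, in which case $\tau_{ir}$ was related to $\sigma_i$ with $r \in J_i$, so $\sigma_i^r \in \sigma^r$ and strongness holds there too. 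That bookkeeping, together with verifying connectedness is used exactly where the Collection proof's settled part was lossy, is the crux.
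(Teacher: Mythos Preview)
Your plan has a genuine gap at its core: you attempt to prove Strong Collection under local connectedness and then deduce Replacement, but Strong Collection carries no uniqueness hypothesis, and without uniqueness the step where you ``pick on $C_r$ a single witness term $\vartheta_{r\hat x}$ (covering $C_r$ by neighborhoods forcing $\phi^r(\hat x,\cdot)$ and gluing via Lemma~\ref{helpfullemma2}, part 3)'' does not go through. Lemma~\ref{helpfullemma2}(3) only says that if every piece of a cover forces \emph{the same} formula, then the union does; it does not let you merge different local witnesses $\tau_K$ into a single term that is still a witness. Connectedness by itself gives you nothing here: on overlaps the various $\tau_K$ need not agree, so there is no single $\vartheta_{r\hat x}$ with $C_r \Vdash \phi^r(\hat x,\vartheta_{r\hat x})$. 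If instead you keep all the local witnesses $\tau_{r\hat x s}^{\,s}$ (as in the Collection proof) and attach them with label $r$, then each such witness lies in $\tau^r$, but you only know $\phi^r(\hat x,\tau_{r\hat x s}^{\,s})$ is forced on some neighborhood of $s$, not on any neighborhood of $r$; strongness fails exactly there. Your own ``main obstacle'' paragraph senses this but never resolves it --- the sentence ``once we have \dots\ committed to a witness on $C_r$, nothing downstream forces us off it'' presupposes the very thing that needs proving. The paper in fact leaves the status of Strong Collection under local connectedness open.

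The paper's argument goes straight for Replacement and uses the uniqueness hypothesis $\exists ! y\,\phi(x,y)$ exactly at this point. Working inside a connected neighborhood $J_r$ of $r$: for each $\langle \hat x,T\rangle \in \sigma^r$, totality covers $J_r$ by opens $K$ with $K \Vdash \phi^r(\hat x,\tau_K)$; after settling, the $\tau_K$ may be taken to be ground model terms $\hat y_K$; by \emph{uniqueness}, whenever two such $K$ overlap the corresponding $\hat y_K$ are forced equal on a nonempty open, and since they are ground model terms this means they are literally equal; by connectedness of $J_r$ all the $\hat y_K$ coincide, say $\hat y$, and one puts $\langle \hat y,r\rangle$ into $\tau$. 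That is how local connectedness is actually used --- uniqueness turns ``agree on overlaps'' into a statement that connectedness can propagate globally. Your reduction to Strong Collection discards uniqueness and with it the only leverage connectedness provides.
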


\begin {proof}
Sketch of proof: When showing the existence of a good bounding set
(cf. the proof of Collection in the main theorem), work in a
connected neighborhood $J_r$ of any given point $r$. For every $J
\Vdash \sigma_i \in \sigma \wedge \phi(\sigma_i, \tau_i)$, include
$\langle \tau_i, J \cap J_r \rangle$ in the bounding set $\tau$.
As for the settling, for any $\langle \hat{x}, T \rangle \in
\sigma^r$, by the totality of $\phi$, $J_r$ is covered by sets $K$
forcing $\phi(\hat{x}, \tau_K)$, for some choice of $\tau_K$. By
settling, $\tau_K$ can be taken to be ground model terms
$\hat{y}_K$. By uniqueness, the $\hat{y}_K$s have to agree
wherever the $K$s overlap. Since they're ground model terms, they
don't vary, so are the same ground model terms on all $K$s that
overlap. By the connectedness of $J_r$, all the $\hat{y}_K$s are
equal, say $\hat{y}$. Include $\langle \hat{y}, r \rangle$ in
$\tau$.
\end {proof}

Analogously with Separation, we do not believe that there is an
equivalence between local connectedness and Replacement. Rather,
it's likely that what's at stake is some kind of definitional
connectedness, whether an open set can be split into disjoint
clopens that are the truth values of different statements. It
would be nice to see a nice topological equivalent of that
property or any interesting fragment of it.

Finally, it would be of interest to see any examples or theorems
along these lines pertaining to Strong Collection that cannot be
reduced to ones about Replacement or Separation.

\end{document}